\documentclass[11pt,oneside,reqno]{amsart}
 \usepackage{amsmath,amsthm,amscd,amssymb,
 }

\usepackage[left,pagewise,mathlines]{lineno}

\usepackage[margin=1in]{geometry}

\usepackage{booktabs}
\usepackage[toc]{appendix}

\providecommand{\keywords}[1]{\textbf{\textit{Keywords---}} #1}

\usepackage{lipsum}

\usepackage{bbm}

\usepackage{latexsym}

\usepackage{amsmath, amsthm, amssymb, latexsym,epsfig,amsthm, enumerate,multicol,wasysym}
\usepackage{bm}
\usepackage{mathtools}
\usepackage[dvipsnames]{xcolor}

\usepackage{graphicx}
\usepackage{nccrules}
\usepackage{xfrac}

 \usepackage{hyperref}
 \hypersetup{
    colorlinks,
    citecolor=red,
    filecolor=red,
    linkcolor=red,
    urlcolor=black
}
\usepackage{textcomp}
\usepackage{tikz}
\usepackage{comment}
\usepackage{enumitem}

\numberwithin{equation}{section}

\theoremstyle{plain}
\newtheorem{theorem}{Theorem}[section]
\newtheorem{lemma}[theorem]{Lemma}
\newtheorem{corollary}[theorem]{Corollary}
\newtheorem{proposition}[theorem]{Proposition}

\newtheorem{problem}[theorem]{Problem}

\theoremstyle{definition}
\newtheorem{definition}[theorem]{Definition}

\newtheorem{example}[theorem]{Example}

\usepackage{mwe}
\usepackage{wrapfig}

\theoremstyle{remark}
\newtheorem{remark}[theorem]{Remark}

\numberwithin{equation}{section}

\newcommand{\ffi}{\varphi}

\newcommand{\abs}[1]{\left\lvert#1\right\rvert}

\def\R{\mathbb{R}}
\def\C{\mathbb{C}}

\usepackage[normalem]{ulem}

\usepackage{latexsym}

\makeatletter
\renewcommand\@makefntext[1]{%
  \noindent
  \makebox[1em][r]{\@makefnmark}#1}
\makeatother

\usepackage{ulem}

\def\dst{\displaystyle}

\newcommand{\scal}[1]{{\left\langle{#1}\right\rangle}}

\newcommand{\T}{\mathbb T}

\DeclareMathOperator{\dist}{dist}

 \DeclareMathOperator{\diag}{diag}

\newcommand{\trivial}{\sim_{\mathbb{T}}}

\newcommand{\partner}{e^{-itH_Q}}

\def\R{\mathbb{R}}

\makeatletter
\renewcommand{\l@section}{\@tocline{1}{0pt}{2em}{}{}}
\renewcommand{\l@subsection}{\@tocline{2}{0em}{3.2em}{}{}}
\renewcommand{\l@subsubsection}{\@tocline{3}{0em}{4.0em}{}{}}
\makeatother

\author{Philippe Jaming}
\address{Univ. Bordeaux, CNRS, Bordeaux INP, IMB, UMR 5251, F-33400 Talence, France}
\email{philippe.jaming@math.u-bordeaux.fr}

\author{Azita Mayeli}
\address{Department of Mathematics, CUNY, The Graduate Center, NY}
\email{amayeli@gc.cuny.edu}

\begin{document}

\title[Dynamical phase retrieval]{Dynamical phase retrieval for Schr\"odinger evolution on finite graphs}

\thanks{P.J. was supported in part by the French National
Research Agency (ANR) under contract number ANR-24-CE40-5470.}

\thanks{A.M. was supported in part by the National Science Foundation grant DMS-2453769, and an AMS-Simons Research Enhancement Grant.}

\date{\today}

\keywords{Dynamical phase retrieval, finite graphs, graph Schr\"odinger operators, phaseless reconstruction, spectral graph theory, diagonal potentials, generic uniqueness, Sidon sets.}
\subjclass[2020]{Primary 05C50; Secondary 15A18, 94A12, 42C15.}

\begin{abstract}
We study dynamical phase retrieval for Schr\"odinger evolutions on finite connected
graphs. Let
\[
H_Q=\Delta_G+Q
\]
be a graph Schr\"odinger operator with a real diagonal potential. We investigate when
phaseless data obtained from the associated Schr\"odinger evolution
\[
|e^{-itH_Q}u_0(j)|,
\qquad
0\leq t\leq T,\ j\in V,
\]
determines the initial state $u_0\in\C^V$ up to a global phase.

We give a uniqueness criterion in terms of the eigenvalues and eigenvectors of 
$H_Q$. The assumptions are a $B_2$ condition on
the spectrum, meaning that the sums $\lambda_j+\lambda_k$ determine the
unordered pair $\{j,k\}$, invertibility of the squared-eigenvector matrix
$\bigl(\phi_k(j)^2\bigr)_{j,k}$
and an overlap condition on the supports of pairs of eigenvectors. Under these
hypotheses, the phaseless Schr\"odinger data determine every initial state
uniquely, modulo global phase.

We then show that the criterion is both realized and generic. Every finite
connected graph admits an explicit real diagonal potential for which the
criterion holds. Moreover, for every finite connected graph, dynamical phase
retrieval holds for Lebesgue-almost every real potential $Q\in\R^V$ and every $T>0$.
We also give several obstructions to uniqueness.
\end{abstract}

\maketitle


\section{Introduction}
The recovery of a vector from the moduli of structured linear measurements is closely related to the spectral geometry of matrices. 
In this paper, we study a dynamical version of this problem generated by a real symmetric matrix $H$: given the coordinatewise magnitudes
$$
\bigl|e^{-itH}u_0(j)\bigr|,\qquad t\in[0,T],\quad j=1,\ldots,n,
$$
we ask whether the initial vector $u_0\in\mathbb C^n$ is determined up to a global phase. 
We prove uniqueness under conditions on the eigenvalues, the zero patterns of the eigenvectors, and the matrix of their squared coordinates that lead to uniqueness. Although much of our analysis extends to arbitrary real symmetric matrices, we focus here on the case where $H$ is a diagonal perturbation of the combinatorial Laplacian of a finite graph, and hence is a graph Schrödinger operator. This setting provides both a natural framework and a physical interpretation for the underlying matrix-theoretic question, while extending the well-studied Pauli problem in quantum mechanics.

\subsection{Background and motivation}

The family of phase retrieval problems, namely the reconstruction of a function from its modulus and {\it a priori} information on the function, is an essential problem in applied sciences. It appears in many fields, including signal and image processing, crystallography, and quantum physics. We refer to \cite{AllainAslanCoeneEtAl2025,GrohsKoppensteinerRathmair2020,KlibanovSacksTikhonravov1995,Mixon2015PhaseTransitions,MP,ShechtmanEldarCohenEtAl2015} for some surveys on this problem.

The precise problem we are interested in here stems from a fundamental question originally asked by W. Pauli
in his foundational treatise {\it General Principles of Quantum Mechanics}~\cite{Pa}: is the state of a quantum particle uniquely determined by its position and momentum probability distributions?
Mathematically, this question is usually reformulated in terms of the Fourier transform defined
for $\psi\in L^1(\R^d)\cap L^2(\R^d)$ as
$$
\widehat{\psi}(\xi)=\int_{\R^d}\psi(x)e^{-i\scal{x,\xi}}\,\mbox{d}x
$$
and extended to $L^2(\R^d)$ in the usual way. Pauli's question can then be reformulated as follows:

\begin{problem}[Pauli's Problem]
Given $\psi\in L^2(\R^d)$, what is the set of all $\ffi\in L^2(\R^d)$ such that
$|\ffi(x)|=|\psi(x)|$ for all $x\in\R^d$ and $|\widehat{\ffi}(\xi)|=|\widehat{\psi}(\xi)|$
for all $\xi\in\R^d$\,?

We then say that $\ffi,\psi$ are {\em Pauli partners}.
\end{problem}

The problem can also be stated for the discrete Fourier transform defined for $\psi\in\C^n$
by
$$
\mathcal{F}_n[\psi](j)=\sum_{k=1}^n \psi(k)e^{-2i\pi(j-1)(k-1)/n}\quad,\ j=1,\ldots,n.
$$
This leads to a finite-dimensional formulation.

Of course, if $\ffi,\psi$ are equal up to a constant phase factor,  that is, if there is
a $c\in\C$ with $|c|=1$ such that $\ffi=c\psi$, then they are Pauli partners.

\smallskip

Throughout this article, we write $\T=\{z\in\C\,:\ |z|=1\}$ and,
when $f,g$ are two functions (resp. vectors in $\C^n$), we say 

\medskip

\begin{center}
$f,g$ are {\em trivial partners}, denoted
$f\trivial g$, if there is a $c\in\T$ such that $f=cg$;
\end{center}

\medskip 

\noindent that is, $f,g$ are equal up to a global phase.

Pauli thus asked whether all partners should be trivial. Counterexamples were quickly constructed
by Bargmann using symmetry properties of the Fourier transform. Despite further investigation
\cite{BelousovIsmagilov2008Pauli,Co,CH,Is,Jaming1999Radar,Janssen1992Zak}
that led to proving uniqueness in some classes and provided more counterexamples,  
a satisfactory solution to the problem is still out of reach.

Later, this lack of uniqueness in Pauli's problem led R. Wright
to conjecture the existence of a third unitary operator $T$ on $L^2(\R^d)$,
possibly with a physical meaning, such that
$|\ffi|=|\psi|$, $|\widehat{\ffi}|=|\widehat{\psi}|$ and $|T\ffi|=|T\psi|$
implies $\ffi\trivial\psi$. This question (as well as its attribution to Wright) can be found
in H. Reichenbach's book \cite{Re} and is still open in the continuous setting (see below for the finite-dimensional setting).

Motivated by this question,   the first author    
reformulated Pauli's problem in 
terms of the free Schr\"odinger equation in \cite{JamingAcha}:
$$
\begin{cases}
i\partial_t u(t,x)=-\Delta_x u(t,x),
& t>0,\ x\in\mathbb R^d,\\[2mm]
u(0,x)=u_0(x),
& x\in\mathbb R^d.
\end{cases}
$$
When $u_0\in L^2(\R^d)\cap L^1(\R^d)$, the solution is then given by
$$
u(t,x)=\frac{e^{i d\pi/4}}{(4\pi t)^{d/2}}e^{i|x|^2/4t}
\int_{\mathbb{R}^d}e^{-i|y|^2/4t}u_0(y)e^{-i\scal{y,x/2t}}\,\mathrm{d}y
=\frac{e^{-i d\pi/4}}{(4\pi t)^{d/2}}e^{i|x|^2/4t}\widehat{\psi_t}(x/2t)
$$
where $\psi_t(y)=u_0(y)e^{i|y|^2/4t}$. Note that $|\psi_t|=|u_0|$
so that Pauli's problem for $\psi_t$ reformulates as follows:

\begin{center}
    {\sl Do $|u(0,\cdot)|$
and $|u(t,\cdot)|$ uniquely determine $u_0$, up to a constant phase factor?}
\end{center}

As the answer is negative, and in view of Wright's conjecture
it becomes natural to ask whether measuring the modulus of the solution at more times leads to uniqueness up to a constant phase factor.
Further, it is then natural to ask the same question for the Schr\"odinger
operator with a reasonable potential $Q$.
We then consider $H_Q=-\Delta+Q$ and the associated Schr\"odinger equation
$$
\begin{cases}
i\partial_t u(t,x)=H_Q u(t,x),
& t>0,\ x\in\mathbb R^d,\\[2mm]
u(0,x)=u_0(x),
& x\in\mathbb R^d.
\end{cases}
$$
For $u_0\in L^2(\R^d)$, we write the solution in terms of the Schr\"odinger propagator: $u(t,\cdot)=e^{-itH_Q}u_0$
which makes sense for reasonable potentials. We then ask the following:

\begin{problem}[Schr\"odinger--Pauli Problem]
Let \(Q\) be a potential on \(\mathbb R^d\), let
\[
H_Q=-\Delta+Q,
\]
and let \(\tau\subset \mathbb R^+\) be a set of observation times.

Given \(u_0\in L^2(\mathbb R^d)\), determine the set of all
\(v_0\in L^2(\mathbb R^d)\) such that
\[
\bigl|e^{-itH_Q}v_0\bigr|
=
\bigl|e^{-itH_Q}u_0\bigr|
\quad\text{a.e. on }\mathbb R^d,\qquad t\in\tau .
\]
We call such \(v_0\) the \((H_Q,\tau)\)-Schr\"odinger--Pauli partners of \(u_0\).
\end{problem}

Ideally, one would like to prove that, for every \(u_0\), or at least for
a large class of initial data, all \((H_Q,\tau)\)-Schr\"odinger--Pauli partners
of \(u_0\) are trivial, that is, coincide with \(u_0\) up to the global-phase 
ambiguities of the problem.
Apart from the free one-dimensional case \(Q=0\), \(d=1\), treated in 
\cite{JamingAcha}, this question remains largely open.

The advantage of the formulation of the Schr\"odinger--Pauli Problem is that it makes sense not only on $\R^d$ but in every geometric structure
in which the Schr\"odinger equation makes sense, such as Riemannian manifolds, finite or infinite graphs,
quantum graphs, etc. The aim of this paper is to start a study 
of this question by focusing on finite graphs.

\subsection{Problem setting}

Recall that a finite graph of $n$ vertices is a pair $G=(V,E)$ where $V=\{1,\ldots,n\}$
is the set of vertices and $E\subset V\times V$ is the set of edges. For $j,k\in V$, we will say that they are adjacent and write
$j\sim k$ if $\{j,k\}\in E$. The combinatorial Laplacian on $G$ is then defined for a function
$u\,:V\to\C$ by
$$
\Delta_G u(j)=\sum_{k\sim j}\bigl(u(j)-u(k)\bigr).
$$
Next, we identify $Q\in\R^{V}=\R^n$ with the operator $\C^{V}\to\C^{V}$ obtained by entrywise multiplication, namely 
$Qu(j)=Q(j)u(j)$. We call   $Q$  a  {\em potential} and write $H_Q=\Delta_G+Q$. For a fixed $T>0$, the Schr\"odinger equation on $G$ with potential $Q$ and  initial datum 
$u_0\in\C^{V}$ is then the equation
$$
\left\{\begin{matrix}
i\partial_tu(t,j)&=&\Delta_G u(t,j)+Q(j)u(t,j)&t\in[0,T],\ j\in V,\\
u(0,j)&=&u_0(j)&j\in V.
\end{matrix}\right.
$$
By the Cauchy-Lipschitz theorem, for every $u_0\in\C^{V}$, there is a unique global solution
that we denote by $e^{-itH_Q}u_0$.

We would like to emphasize that, although one could allow complex-valued potentials, this would lead to a
non-self-adjoint Schr\"odinger operator and hence to a non-unitary evolution.
Since our motivation is the conservative Schr\"odinger dynamics on a graph, we
restrict ourselves to {\em real-valued potentials}.

The Schr\"odinger equation on a finite graph describes the unitary propagation of a complex-valued wave function 
whose geometry is encoded by the edges of the graph. Spectral properties of the graph Laplacian, such as eigenvalues 
and eigenvectors, determine the long-time behavior of the solutions. This finite-dimensional model is useful both as 
a discrete analogue of the continuous Schr\"odinger equation (recall that the Laplacian on the
path $\{1,\ldots,n\}$ with edges $j\sim j+1$ is the natural discretization of $-\partial_x^2$)
and as a model for quantum dynamics on networks.
When $\|u_0\|_2=1$, then $|u(t,j)|^2$ represents the probability of finding a particle at time $t$ at the vertex $j$.
The question we are addressing here is whether the corresponding probability distribution
over a set of times $\tau$ uniquely determines the wave function $u_0$.
Here  we restrict attention to the case
$\tau=[0,T]$ and ask the following question:

\begin{problem}[Schr\"odinger-Pauli Problem on finite graphs]\label{prob:schrodinger-Pauli}
Let $T>0$, $G=(V,E)$ be a finite graph, $\Delta_G$ its combinatorial Laplacian,
$Q$ a potential on $V$ and $H_Q=\Delta_G+Q$. Two vectors $u_0,v_0\in\C^{V}$
will be called $\partner$-partners if $|e^{-itH_Q}u_0(j)|=|e^{-itH_Q}v_0(j)|$ for every $t\in[0,T]$ and every $j\in V$.  Is every pair of  $\partner$-partners a pair of trivial partners? 
In this case, we say that $H_Q$ does {\em Schr\"odinger phase retrieval}.
\end{problem}

The main goal of this paper is to answer Problem \ref{prob:schrodinger-Pauli} by identifying simple spectral conditions under which the Schr\"odinger evolution on a finite graph performs phase retrieval. We will introduce three conditions, named $B_2$, \textup{(I)}, and \textup{(S)},
that imply uniqueness up to a global phase for every initial state. Those are
related to the eigenvalue distribution, the matrix of the square coordinates of the eigenvectors
and their zero pattern. We will then show that these conditions are generically satisfied after adding a diagonal potential to $\Delta_G$ on every connected graph. Notice that, unlike in the $L^2(\mathbb{R}^d)$-setting mentioned above, we do not speak here of ($\partner,\tau$)-partners. Indeed, one of the first outcomes of our analysis is that the size of the time interval $\tau$ has no influence on the results and that the continuous-time measurements may even be reduced to a finite set of observation times.

\subsection{Measurement operator and frame interpretation}

It is useful to reformulate the dynamical phase retrieval problem in terms of a measurement operator. For a fixed graph $G=(V,E)$, potential $Q$, and time interval $[0,T]$, define the map
\[
\mathcal{M}_{Q,T} : \C^V \to \R_+^{V \times [0,T]}, \qquad
\mathcal{M}_{Q,T}(u_0) = \big( |e^{-itH_Q} u_0(j)| \big)_{t \in [0,T],\, j \in V}.
\]
The Schr\"odinger phase retrieval problem then asks whether $\mathcal{M}_{Q,T}$ is injective modulo the natural $\T$-action, that is,
\[
\mathcal{M}_{Q,T}(u_0) = \mathcal{M}_{Q,T}(v_0)
\quad \Longrightarrow \quad
v_0 \sim_{\T} u_0.
\]

For each fixed $t$ and $j$, the quantity $e^{-itH_Q}u_0(j)$ can be written as an inner product
\[
e^{-itH_Q}u_0(j) = \langle u_0, f_{t,j} \rangle,
\]
where $f_{t,j} = e^{itH_Q}\delta_j$. 
Consequently, since \(e^{-itH_Q}\) is unitary, $(f_{t,j})_{t \in [0,T],\, j \in V}$ forms a continuous tight  frame for $\C^V$ with frame constant $T$. Moreover, the map $\mathcal{M}_{Q,T}$ is the corresponding phaseless measurement map for the frame coefficients.

Equivalently, the dynamical phase retrieval problem may be seen as a phase retrieval problem for a structured continuous frame generated by the Schr\"odinger evolution. The conditions introduced below, namely \(B_2\), \textup{(I)}, and \textup{(S)},
ensure that this frame does phase retrieval.

\smallskip

\noindent{\it Remark.} Although the frame $(f_{t,j})_{t \in [0,T],\, j \in V}$  is continuous in $t$, Lemma \ref{lem:linind}  below shows that it can be reduced to a finite set of times without loss of information.

\smallskip

At this point, we outline some of the related literature on phase retrieval from frame coefficients. Recall that, in finite dimensions, a frame is simply a spanning family. The corresponding problem may be formulated as follows.

\begin{problem}[Frame phase retrieval problem]
For which frames $(f_j)_{j=1}^N$ of $\mathbb C^n$ does
$$
|\langle u_0,f_j\rangle|=|\langle v_0,f_j\rangle|,
\qquad j=1,\ldots,N,
$$
imply that $u_0\trivial v_0$?
In this case, one says that the frame does phase retrieval.
\end{problem}

There is now an extensive mathematical literature on this problem, initiated by the seminal work of Balan \emph{et al.} \cite{BalanBodmannCasazzaEdidin2009,BCE}, and further stimulated by the development of new algorithms based on convex relaxations
\cite{CandesEldarStrohmerVoroninski2013,CandesStrohmerVoroninski2013,WaldspurgerDAspremontMallat2015}. These complement the earlier algorithms based on alternating projections, for which convergence is still not fully understood; see, for instance, \cite{BauschkeCombettesLuke2002,Fienup1982}.

Several works concern phase retrieval for structured frames, such as Gabor frames or coded diffraction patterns ({\it see e.g.} \cite{BF,GKK} and references therein), and the present paper may be viewed, to some extent, as belonging to this line of work. 
Here, we would like to highlight the work of Aldroubi {\it et al.} \cite{AKT}
and Beinert and Hasannasab \cite{BH} who also investigate dynamical phase retrieval from frames.
However, their measurement scheme is different since they are looking for vectors $(f_k)$
and {\em real} matrices $A$ such that, for some sequence of {\em integers} $\{t_j\}$, $|\scal{u_0,A^{t_j}f_k}|=
|\scal{v_0,A^{t_j}f_k}|$ implies $u_0\trivial v_0$. Taking $f_k$ to be the standard basis,
our situation would correspond to $A=e^{iH_Q}$, but the times $\{t_j\}$
need not be integers, and $A$ is not a real matrix.
The common feature is that the spectral properties of $A$
play a key role in both findings.

Another important question is motivated by Wright's conjecture and by the search for frames that do phase retrieval with minimal cardinality. A natural way of constructing frames with good properties is to take unions of orthonormal bases of
$\mathbb{C}^n$. Among these, a standard class of candidates is provided by mutually unbiased bases: recall that two orthonormal bases $(e_j)_{j=1}^n$ and $(f_j)_{j=1}^n$ are said to be mutually unbiased if
$|\langle e_j,f_k\rangle|=n^{-1/2}$,
 for $1\leq j,k\leq n$.
Wright's conjecture would then assert that the union of three orthonormal bases does phase retrieval. This was disproved by Moroz and Perelomov \cite{MP}, whereas Balan \emph{et al.} \cite{BalanBodmannCasazzaEdidin2009} proved that the union of
$n+1$ mutually unbiased orthonormal bases does phase retrieval. Such a family of bases is, however, known to exist only when $n$ is a prime power. Heinosaari \emph{et al.} \cite{HMW} proved the stronger lower bound that the minimal number of vectors in a frame doing phase retrieval in $\mathbb{C}^n$ is at least $3n+\alpha_n$, with $\alpha_n\to+\infty$ as
$n\to+\infty$. In unpublished work, Mondragon and Voroninski \cite{MV} showed that the union of four generic orthonormal bases does phase retrieval; a later published paper \cite{Go} proves the corresponding result for five bases.

Our aim here is different. Although one can discretize the \emph{continuous frame} considered here
and thereby obtain an ordinary finite frame, this would lead to a frame with far more than $4n$ elements.
The point is rather that the frame considered here has a natural physical meaning, in the spirit of the second aspect of Wright's conjecture. 

\subsection{Main results}

We now describe the main results of the paper. We fix a finite graph $G=(V,E)$,
and we assume it is connected. This condition is easily seen to be necessary for any $H_Q=\Delta_G+Q$, $Q\in\R^V$
to do phase retrieval.
Next $H_Q$ is a {\em real} self-adjoint operator\footnote{A substantial part of this paper can be extended
directly
to arbitrary self-adjoint operators, thus also including Schr\"odinger operators on undirected weighted graphs.} on $\mathbb C^V$  so that the spectral theorem
provides us with an orthonormal basis of eigenvectors $(\phi_k)_{k=1}^n$ with
$(\lambda_k)_{k=1}^n\in\R^n$ the corresponding (real) eigenvalues. Note that the eigenvectors
may (and will) be chosen to be real vectors.  Let \(u_0,v_0\in \mathbb C^V\). We decompose them in this basis 
\[
u_0=\sum_{k=1}^na_k\phi_k\quad\mbox{and}\quad
v_0=\sum_{k=1}^nb_k\phi_k
\]
and solve the Schr\"odinger equation
\[
e^{-itH_Q}u_0=\sum_{k=1}^na_ke^{-i\lambda_kt}\phi_k
\quad\mbox{and}\quad
e^{-itH_Q}v_0=\sum_{k=1}^nb_ke^{-i\lambda_kt}\phi_k.
\]
The question we are then asking is the following: let $a=(a_k)_{k=1}^n$, $b=(b_k)_{k=1}^n\in\C^n$ satisfy
\begin{equation}
    \label{eq:thequestion}
\abs{\sum_{k=1}^na_ke^{-i\lambda_kt}\phi_k(j)}^2
=\abs{\sum_{k=1}^nb_ke^{-i\lambda_kt}\phi_k(j)}^2
\quad \mbox{for all }t\in[0,T]\mbox{ and }j\in\{1,\ldots,n\}. 
\end{equation}
Does this imply that $a\trivial b$?

In this formulation, it becomes obvious that the spectral decomposition of $H_Q$ plays a central role.
Let us now introduce the three properties for $H_Q$ that are involved:

-- we will say that $H_Q$ has Condition $B_2$ if the set of eigenvalues is a $B_2$-set,  namely, 
$\lambda_j+\lambda_k=\lambda_{j'}+\lambda_{k'}$ if and only if $\{j,k\}=\{j',k'\}$ (in particular, the eigenvalues
are simple). This property allows us to reduce \eqref{eq:thequestion} to a set of algebraic
equations on $a_k,b_k$, namely
\begin{equation}
    \label{eq:thereduction}
    \left\{
    \begin{matrix}\dst
\sum_{k=1}^n|a_k|^2\phi_k(j)^2&=&\dst\sum_{k=1}^n|b_k|^2\phi_k(j)^2&&j=1,\ldots,n\\
a_k\overline{a_{\ell}}\phi_k(j)\phi_\ell(j)&=&b_k\overline{b_{\ell}}\phi_k(j)\phi_\ell(j)&\quad&k\not=\ell,\ j=1,\ldots,n
    \end{matrix}\right..
\end{equation}

-- The second condition, denoted by $\textup{(I)}$, is that the squared-eigenvector matrix
\[
\bigl(\phi_k(j)^2\bigr)_{j,k=1}^n
\]
is invertible. With this property, the first line of \eqref{eq:thereduction} reduces to
\[
|a_k|^2=|b_k|^2,
\qquad k=1,\ldots,n.
\]

-- The last condition we need, denoted by $\textup{(S)}$, is that the supports of every  pair of  eigenvectors $\phi_k,\phi_\ell$
intersect, {\it i.e.},  that there exists $j$ such that $\phi_k(j)\phi_\ell(j)\not=0$.
Of course, if every eigenvector has full support, then Condition $\textup{(S)}$ holds.

We say that $H_Q$ satisfies Conditions $B_2$--\textup{(I)}--\textup{(S)} if it satisfies
each of the three conditions. Our main result is then the following:

\begin{theorem}\label{thm:mainth}
    Let $G=(V,E)$ be a connected graph and $Q\in\R^V$. Assume that $H_Q$
    satisfies Conditions $B_2$--\textup{(I)}--\textup{(S)}. Then $H_Q$ does Schr\"odinger phase retrieval.
\end{theorem}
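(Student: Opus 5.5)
Expand both sides of \eqref{eq:thequestion} as trigonometric polynomials in $t$ and then use the three conditions in turn. Since the $\phi_k$ are real,
\[
\abs{\sum_{k}a_ke^{-i\lambda_kt}\phi_k(j)}^2=\sum_{k,\ell}a_k\overline{a_\ell}\,\phi_k(j)\phi_\ell(j)\,e^{-i(\lambda_k-\lambda_\ell)t},
\]
and similarly for $b$. Both sides are finite exponential sums, hence real-analytic in $t$, so equality on $[0,T]$ extends to all $t\in\R$. The frequencies $\lambda_k-\lambda_\ell$ with $k\neq\ell$ are pairwise distinct and nonzero by Condition $B_2$: if $\lambda_k-\lambda_\ell=\lambda_{k'}-\lambda_{\ell'}$ then $\lambda_k+\lambda_{\ell'}=\lambda_{k'}+\lambda_\ell$, so $\{k,\ell'\}=\{k',\ell\}$. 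This forces $(k,\ell)=(k',\ell')$, or else $k=\ell$ and $k'=\ell'$, which is excluded. Linear independence of distinct exponentials then lets us identify coefficients. The constant term gives the first line of \eqref{eq:thereduction}, and each frequency $\lambda_k-\lambda_\ell$ gives the second line for each $j$.

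Next, Condition (I) applied to the first line, rewritten as $\sum_k(|a_k|^2-|b_k|^2)\phi_k(j)^2=0$ for all $j$, yields $|a_k|=|b_k|$ for every $k$. Condition (S) provides, for each pair $k\neq\ell$, a vertex $j$ with $\phi_k(j)\phi_\ell(j)\neq0$. Dividing the second line at that $j$ gives $a_k\overline{a_\ell}=b_k\overline{b_\ell}$ for all $k\neq\ell$, and trivially also for $k=\ell$. Hence the rank-one matrices $aa^*$ and $bb^*$ coincide.

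It remains to show that $aa^*=bb^*$ implies $a\trivial b$. If $a=0$ then $|b_k|=|a_k|=0$ and we are done. Otherwise pick $m$ with $a_m\neq0$, so $b_m\neq0$ with $|b_m|=|a_m|$. Set $c=b_m/a_m\in\T$. For every $k$, $b_k\overline{b_m}=a_k\overline{a_m}$ gives $b_k=a_k\overline{a_m}/\overline{b_m}=a_k\, b_m/a_m\cdot(|a_m|^2/|b_m|^2)=c\,a_k$. Thus $b=ca$, and therefore $v_0=cu_0$.

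The only step needing care is the coefficient identification. It relies on the $B_2$ condition to ensure that different off-diagonal pairs do not share a frequency and that no off-diagonal frequency collapses to $0$. It also relies on analyticity (or simply linear independence of $e^{-i\omega t}$ for distinct $\omega$ on an interval) to pass from the interval $[0,T]$ to coefficientwise equality. Once this reduction to \eqref{eq:thereduction} is secured, the rest is direct algebra. Connectedness of $G$ is not used beyond being implicit in the hypotheses.
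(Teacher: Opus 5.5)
Your proof is correct and follows the paper's argument. Condition $B_2$ and the linear independence of exponentials reduce the data to \eqref{eq:thereduction}, Condition \textup{(I)} gives $|a_k|=|b_k|$, and Condition \textup{(S)} gives $a_k\overline{a_\ell}=b_k\overline{b_\ell}$ for all $k\neq\ell$. The paper packages the last step as phase propagation on the graph $\Gamma_Q$ (Proposition~\ref{prop:classification}); since \textup{(S)} makes $\Gamma_Q$ complete, this reduces to your direct extraction of $c$ from $aa^*=bb^*$.
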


We will show in Proposition \ref{prop:b2andsimpliesS} that for small graphs of at most 5 vertices,
Conditions $B_2$ and \textup{(I)} imply Condition \textup{(S)}. This is not true for graphs
of at least $6$ vertices.

Next, we highlight the role of the potential $Q$. First, the unperturbed Laplacians
never satisfy Condition \textup{(I)} ({\it see} Section \ref{sec:genI}). Further, the Laplacians of many classical graphs, including path graphs, 
cycle graphs,  complete graphs,
complete bipartite graphs, etc., typically fail Condition $B_2$ (there are even often multiple eigenvalues due to the symmetries present in such graphs); {\it see,  e.g., } 
\cite{brouwer2011spectra}. Thus, the potential $Q$ plays a key role.

We prove two results in this direction. First, we construct explicitly, for
every connected graph $G=(V,E)$, a potential $Q\in\mathbb R^V$ such that
$H_Q$ satisfies Conditions $B_2$--\textup{(I)}--\textup{(S)}. This is
Proposition~\ref{prop:explicit-good-potential-connected}. Second, we show
that this property is generic and it holds for Lebesgue-almost every potential: if $G$ is connected, then for almost every
$Q\in\mathbb R^V$, the operator $H_Q$ satisfies Conditions
$B_2$--\textup{(I)}--\textup{(S)}. Consequently, Schr\"odinger
phase retrieval holds generically: 

\begin{theorem}\label{thm:generic}
     Let $G=(V,E)$ be a connected graph. Then, for almost every $Q\in\R^V$, $H_Q$ does Schr\"odinger phase retrieval.
\end{theorem}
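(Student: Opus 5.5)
The plan is to reduce Theorem~\ref{thm:generic} to Theorem~\ref{thm:mainth}: we show that the set of potentials $Q\in\R^V$ for which $H_Q$ fails one of the Conditions $B_2$, \textup{(I)} or \textup{(S)} is Lebesgue-null. The natural tool is real-analyticity in $Q$. Each failure set should be contained in the zero set of a nonzero real-analytic (in fact polynomial) function of $Q$, and such zero sets have measure zero. The explicit potential of Proposition~\ref{prop:explicit-good-potential-connected} will serve as the witness that each of these functions is not identically zero.

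First we handle $B_2$. The eigenvalues $\lambda_k(Q)$ are roots of $\det(\lambda-H_Q)$, and they are not globally analytic, so instead we use a symmetric function of them. Consider
\[
D(Q)=\prod_{\{j,k\}\neq\{j',k'\}}\bigl(\lambda_j+\lambda_k-\lambda_{j'}-\lambda_{k'}\bigr),
\]
where the product runs over all ordered pairs of distinct unordered pairs. Since this product is invariant under permutations of the eigenvalues, it is a polynomial in the coefficients of the characteristic polynomial, hence a polynomial in $Q$. Failure of $B_2$ is exactly $D(Q)=0$, and $D$ is nonzero at the explicit potential, so $\{D=0\}$ is a proper algebraic set and therefore null.

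Next we treat \textup{(I)} and \textup{(S)} on the open, full-measure set $U=\{D\neq0\}$, where the spectrum is simple. The squared eigenvector entries $\phi_k(j)^2$ do not depend on the choice of sign, and locally on $U$ they are real-analytic functions of $Q$ given by the spectral projections, namely $\phi_k(j)^2=P_k(j,j)$ with $P_k=\prod_{m\neq k}(H_Q-\lambda_m)/(\lambda_k-\lambda_m)$. To get a globally defined function, take
\[
F(Q)=\prod_{\sigma\in S_n}\det\bigl(P_{\sigma(k)}(j,j)\bigr)_{j,k},
\]
or more simply the square of the determinant, which is invariant under reordering the columns. We multiply this by a suitable power of the discriminant to clear denominators, which makes it a symmetric expression and therefore a polynomial in $Q$. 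Condition \textup{(S)} is handled the same way: failure means $\phi_k(j)\phi_\ell(j)=0$ for all $j$ and some $k\neq\ell$, which is the vanishing of $\sum_j P_k(j,j)P_\ell(j,j)$, and we take the product of these sums over all pairs $k\neq\ell$, again symmetrised. All of these functions are nonzero at the explicit potential, so each vanishing set is null. The union of the three null sets is null, and for $Q$ outside it, Theorem~\ref{thm:mainth} applies for every $T>0$.

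The main obstacle is making sure the symmetrised functions are genuinely polynomial, or at least real-analytic on a connected domain, so that nonvanishing at one point implies the zero set is null. If the symmetrisation is awkward, the fallback is to work on $U$ directly. We would argue that $U$ is connected, because its complement is an algebraic hypersurface, but that complement may disconnect $\R^n$. A cleaner route is therefore to use analytic eigenvalue branches (Rellich) along lines in $\R^V$ and apply a Fubini argument, or simply to rely on the polynomial symmetrisation, which we expect to work since everything is expressed through $H_Q$ and its characteristic polynomial.
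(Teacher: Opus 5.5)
Your proof is correct. Its architecture matches the paper's: each of $B_2$, \textup{(I)}, \textup{(S)} fails only on the zero set of a nonzero polynomial in $Q$, and then Theorem~\ref{thm:mainth} applies. The two ingredients, however, are handled differently.

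\textbf{The polynomials.} You symmetrise expressions in the eigenvalues and the Lagrange spectral projections, and clear denominators with powers of the discriminant. This does work. After clearing, each expression is a symmetric polynomial in $\lambda_1,\dots,\lambda_n$ with coefficients in $\R[Q]$. It is therefore a polynomial over $\R[Q]$ in the coefficients of $\det(X-H_Q)$. So the fallbacks you mention (Rellich branches, connectedness of $U$) are unnecessary.

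The paper instead uses matrices whose entries are visibly polynomial in $Q$:
\begin{itemize}
\item for $B_2$, the discriminant of $H_Q\otimes I+I\otimes H_Q$ on $\operatorname{Sym}^2(\R^n)$. This is your $D$ up to sign, provided your unordered pairs include the diagonal ones $\{j,j\}$.
\item for \textup{(I)}, the matrix $B_Q=[(H_Q^{m-1})_{j,j}]$, with $\det B_Q=\det M_Q\cdot\det V_Q$. So a single Vandermonde factor already clears the denominators.
\item for \textup{(S)}, the Krylov determinants $\det[e_a,H_Qe_a,\dots,H_Q^{n-1}e_a]$. These give the stronger conclusion that every eigenvector has full support for generic $Q$.
\end{itemize}

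\textbf{The witnesses.} You reuse Proposition~\ref{prop:explicit-good-potential-connected} as a single witness for all three polynomials. This is economical, since that proposition already contains the diagonal-dominance and shortest-path asymptotics. The paper runs separate asymptotic computations along rays $Q=tc$. A side benefit is that $B_2$ and \textup{(I)} come out generic even on disconnected graphs.
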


The result is actually a bit stronger, as the set of $Q$'s to be eventually excluded is a proper algebraic set.

Finally, we will also construct graphs and potentials that fail to do phase retrieval.
For instance, when $G$ is not connected or when $H_Q$ does not satisfy Condition \textup{(S)} then
$H_Q$ fails to do phase retrieval. The same is true if $H_Q$ has eigenvalues with multiplicity,
thus failing to satisfy   Condition $B_2$. We provide examples where this happens.
We also provide examples where Condition  \textup{(I)} is not satisfied, and $H_Q$ fails to do phase retrieval.

\begin{remark}
Theorem~\ref{thm:mainth} is purely spectral. Its proof uses only that $H_Q$ is a
real symmetric matrix satisfying Conditions $B_2$--\textup{(I)}--\textup{(S)}.
Thus, the same conclusion holds for any real symmetric matrix $H$ on $\mathbb C^n$,
with measurements $|e^{-itH}u(j)|$ in the fixed coordinate basis.
 We would, however,  lose a part of the physical interpretation
of the result.

Theorem \ref{thm:generic},  on the other hand, uses the connectedness of the graph
to establish the genericity of Condition  \textup{(S)}. It can be extended to weighted graphs.
\end{remark}

\subsection{Organization of the paper}

The paper is organized as follows. In Section~\ref{sec:spectral-criterion} we prove the main spectral
criterion, Theorem~\ref{thm:mainth}. We also prove the classification of
partners under the $B_2$ and \textup{(I)} assumptions,
Proposition~\ref{prop:classification}, and derive the almost-everywhere
uniqueness criterion, Corollary~\ref{cor:ae-phase-retrieval}. We then record
a finite-time sampling result, Proposition~\ref{prop:finite-time-sampling},
and a frame-measurement variant of the same criterion.

Section~ \ref{sec:diagonal-potentials} is devoted to diagonal potentials. We then prove that every connected graph admits an
explicit diagonal potential for which  Conditions $B_2$--\textup{(I)}--\textup{(S)} hold, Proposition~\ref{prop:explicit-good-potential-connected}.
The rest of the section proves the genericity of these conditions under
diagonal perturbations: Proposition~\ref{prop:generic-B2} for the Condition $B_2$, 
Proposition~\ref{prop:generic-I} for Condition \textup{(I)}, and
Theorem~\ref{thm:generic-S} for Condition \textup{(S)}. Together, these results
give the generic uniqueness theorem, Theorem~\ref{thm:generic}.

In Section~\ref{sec:obstructions} we discuss the main obstructions to uniqueness. We treat
disconnected graphs, multiple eigenvalues and eigenvectors
with the same pointwise modulus. Some of those obstructions occur on product graphs.

The paper concludes in Section~\ref{sec:conclusion}
 with several remarks and open questions,
including the unperturbed case $Q=0$ and algorithmic aspects of the problem.

\subsection*{Main contributions.}
The main contributions of this paper may be summarized as follows.

\begin{itemize}
\item We formulate Schr\"odinger phase retrieval on finite graphs as a phase retrieval problem for a structured continuous frame generated by the Schr\"odinger evolution.

\item We prove that continuous-time observations may be replaced by finitely many sampling times without loss of information.

\item We identify three simple spectral conditions, namely $B_2$, \textup{(I)}, and \textup{(S)}, and prove that together they imply uniqueness up to a global phase for every initial state.

\item We prove that these conditions are generically satisfied after diagonal perturbations of the graph Laplacian and construct explicit diagonal potentials on every connected graph for which Schr\"odinger phase retrieval holds.

\item We identify graph-theoretic obstructions to phase retrieval and show how the support-intersection property of eigenvectors governs uniqueness.

\end{itemize}

\section{The spectral criterion}\label{sec:spectral-criterion}
\subsection{Proof of Theorem \ref{thm:mainth}}

We prove the theorem in two steps. 

\smallskip

\noindent{\bf Step 1.} {\em Reduction to \eqref{eq:thereduction} with Conditions $B_2$ and $\textup{(I)}$.}

As explained in the introduction, we fix a finite graph $G=(V,E)$ with $V=\{1,\ldots,n\}$, and a potential $Q\in \R^V$.
Then $H_Q=\Delta_G+Q$ is self-adjoint and has a real orthonormal basis of eigenvectors
$(\phi_j)_{j=1}^n$ with $$H_Q\phi_j=\lambda_j\phi_j, \qquad \lambda_j\in \mathbb R.$$  
Take $u_0,v_0\in\C^V$ and decompose them with respect to this basis: 
$$
u_0=\sum_{k=1}^na_k\phi_k\quad\mbox{and}\quad v_0=\sum_{k=1}^nb_k\phi_k.
$$
Then 
$$
e^{-itH_Q}u_0=\sum_{k=1}^na_ke^{-i\lambda_k t}\phi_k\quad\mbox{and}\quad e^{-itH_Q}v_0=\sum_{k=1}^nb_ke^{-i\lambda_k t}\phi_k.
$$
It follows that the equality $|e^{-itH_Q}u_0(j)|^2=|e^{-itH_Q}v_0(j)|^2$ is equivalent to
\[
\sum_{k,\ell=1}^n
\big(a_k\overline{a_\ell}-b_k\overline{b_\ell}\big)
e^{-i(\lambda_k-\lambda_\ell)t}
\phi_k(j)\phi_\ell(j)=0
\]
for all $t\in[0,T]$ and all $j\in V$.


We will now use the following simple, well-known lemma:

\begin{lemma}\label{lem:linind}
Let $\mu_1,\ldots,\mu_N\in\mathbb C$ be distinct. Then the functions
$e^{i\mu_1 t},\ldots,e^{i\mu_N t}$ are linearly independent on every interval
$[0,T]$, $T>0$. Equivalently,
\[
F(t):=\sum_{r=1}^N c_r e^{i\mu_r t}\equiv 0 \ \text{on } [0,T]
\quad\Longrightarrow\quad
c_r=0 \ \text{for all } r.
\]
\end{lemma}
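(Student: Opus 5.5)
The plan is to prove the lemma by induction on $N$, using repeated differentiation to kill one exponential at a time. An alternative is a Vandermonde argument, which I keep as a cross-check.

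\emph{Base and induction step.} For $N=1$ the claim is immediate: $c_1e^{i\mu_1t}\equiv0$ forces $c_1=0$, since the exponential never vanishes. Now suppose the claim holds for $N-1$ distinct frequencies, and let $F\equiv0$ on $[0,T]$. Multiply by $e^{-i\mu_N t}$ to obtain
\[
G(t)=\sum_{r=1}^{N-1}c_r e^{i(\mu_r-\mu_N)t}+c_N\equiv0 .
\]
Differentiate on the open interval $(0,T)$, where $G$ is smooth and identically zero. This gives
\[
\sum_{r=1}^{N-1} i(\mu_r-\mu_N)c_r\, e^{i(\mu_r-\mu_N)t}\equiv 0 \quad\text{on } (0,T).
\]
By continuity, the identity extends to $[0,T]$.

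\emph{Applying the hypothesis.} The frequencies $\mu_r-\mu_N$, $r\le N-1$, are still distinct. The induction hypothesis therefore gives $i(\mu_r-\mu_N)c_r=0$ for each such $r$. Distinctness of the $\mu_r$ makes $\mu_r-\mu_N\neq0$, so $c_r=0$ for all $r\le N-1$. Substituting back into $G\equiv0$ gives $c_N=0$, which completes the induction.

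\emph{Alternative via Vandermonde.} $F$ is real-analytic, indeed entire, so $F\equiv0$ on $[0,T]$ implies $F^{(m)}(0)=0$ for every $m$. Since $F^{(m)}(0)=\sum_r c_r (i\mu_r)^m$, taking $m=0,\dots,N-1$ gives a Vandermonde system in the distinct nodes $i\mu_r$. That system is invertible, so all $c_r=0$.

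\emph{Main obstacle.} There is essentially none. The only point needing care is the induction step: the hypothesis is applied to the shifted frequencies $\mu_r-\mu_N$, and one must check they remain distinct and nonzero.
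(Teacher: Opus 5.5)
Your proof is correct, and your ``alternative'' is exactly the paper's argument. The paper's sketch differentiates $F$ successively $N-1$ times, evaluates at $0$, and uses that the Vandermonde matrix in the distinct nodes $i\mu_r$ is nonsingular.

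Your primary argument is a different route. You induct on $N$: multiplying by $e^{-i\mu_N t}$ and differentiating once removes one exponential at a time. The checks you flag are the right ones:
\begin{itemize}
\item the shifted frequencies $\mu_r-\mu_N$ remain distinct and are nonzero;
\item the differentiated identity extends from $(0,T)$ to $[0,T]$ by continuity.
\end{itemize}
The argument also works verbatim for complex $\mu_r$.

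\textbf{What the induction buys.} It is self-contained. It needs neither the Vandermonde determinant formula nor any remark about one-sided derivatives at the endpoint $t=0$, which the Vandermonde route must handle, as you do, by analyticity of $F$.

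\textbf{What the paper's route buys.} It transfers immediately to finitely many time samples. Replacing the derivatives at $0$ by the values at $t=0,h,\ldots,(N-1)h$ gives the Vandermonde matrix $\bigl[e^{i\mu_r h(m-1)}\bigr]$ in the nodes $e^{i\mu_r h}$. This is precisely how the paper obtains Proposition~\ref{prop:finite-time-sampling}. Your induction has a discrete analogue, using the difference operator $s_m\mapsto s_{m+1}-e^{i\mu_N h}s_m$ in place of $\frac{d}{dt}$, but the one-shot linear-algebra formulation is more direct for that purpose.
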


\begin{proof}[Sketch of proof]
It is enough to notice that differentiating successively $N-1$ times $F$ and evaluating at $0$, we obtain a
nonsingular Vandermonde system, hence $c_r=0$ for all $r$.    
\end{proof}

Now, if $H_Q$ satisfies $B_2$, 
by Lemma \ref{lem:linind}, we obtain that the following two statements are equivalent:
\begin{enumerate}
\renewcommand{\theenumi}{\roman{enumi}}
    \item $|e^{-itH_Q}u_0(j)|=|e^{-itH_Q}v_0(j)|$ for all $j$ and $t\in[0,T]$

    \item the coefficients $a=(a_k)_{k=1}^n$, $b=(b_k)_{k=1}^n$ of $u_0$ and $v_0$ in the eigenbasis of $H_Q$ satisfy
    the two equations
\begin{equation}
    \label{eq:firstcond}
\sum_{k=1}^n|a_k|^2\phi_k(j)^2=\sum_{k=1}^n|b_k|^2\phi_k(j)^2 \qquad j\in\{1,\ldots,n\},\quad \text{and}
\end{equation}
\begin{equation}
    \label{eq:secondcond}
a_k\overline{a_\ell}\phi_k(j)\phi_\ell(j)=b_k\overline{b_\ell}\phi_k(j)\phi_\ell(j) \quad j\in\{1,\ldots,n\}
\quad k\not=\ell.
\end{equation}
\end{enumerate}
Finally, if $H_Q$ satisfies Condition $\textup{(I)}$, then \eqref{eq:firstcond} is equivalent to
\begin{equation}
    \label{eq:thirdcond}
|a_k|=|b_k| \qquad k\in\{1,\ldots,n\}.
\end{equation}

\begin{remark}
    Note that \eqref{eq:secondcond}-\eqref{eq:thirdcond} do not depend on $T$.
    Consequently, if the equality of phaseless measurements,
    $|e^{-itH_Q}u_0(j)|=|e^{-itH_Q}v_0(j)|$ for all $j$, holds on one non-degenerate interval $t\in [0,T]$,
    then it holds on every non-degenerate time-interval.
\end{remark}

\smallskip

\noindent{\bf Step 2.} {\em Propagation of the phases.}

Define a graph $\Gamma_Q$ on the index set $\{1,\ldots,n\}$ by declaring
$k\sim \ell$ if there exists $j\in V$ such that
\[
\phi_k(j)\phi_\ell(j)\neq 0.
\]
Condition \textup{(S)} says precisely that $\Gamma_Q$ is the complete graph.

We prove the following slightly stronger statement.

\begin{proposition}[Classification of partners under $B_2$ and \textup{(I)}]\label{prop:classification} 
Assume that $H_Q$ satisfies Conditions $B_2$ and \textup{(I)}.
Let
\[
u_0=\sum_{k=1}^n a_k\phi_k,
\qquad
v_0=\sum_{k=1}^n b_k\phi_k.
\]
Let $A=\{k:a_k\neq0\}$.
Then $u_0$ and $v_0$ are $\partner$-partners if and only if
\[
\{k:b_k\neq0\}=A
\]
and, for every connected component $K$ of the induced graph $\Gamma_Q(A)$,
there exists $c_K\in\mathbb T$ such that
\[
b_k=c_Ka_k,
\qquad k\in K.
\]
\end{proposition}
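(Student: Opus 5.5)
By Step 1, under $B_2$ and \textup{(I)}, $u_0$ and $v_0$ are $\partner$-partners if and only if the coefficients satisfy \eqref{eq:secondcond} and \eqref{eq:thirdcond}. Since $k\sim\ell$ in $\Gamma_Q$ means exactly that $\phi_k(j)\phi_\ell(j)\neq0$ for some $j$, condition \eqref{eq:secondcond} is equivalent to
\[
a_k\overline{a_\ell}=b_k\overline{b_\ell}\qquad\text{for every edge } k\sim\ell \text{ of }\Gamma_Q .
\]
For non-adjacent $k\neq\ell$, every product $\phi_k(j)\phi_\ell(j)$ vanishes and \eqref{eq:secondcond} is empty. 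So the proof amounts to showing that the system
\[
|a_k|=|b_k|\ \ (\text{all } k),\qquad a_k\overline{a_\ell}=b_k\overline{b_\ell}\ \ (k\sim\ell \text{ in } \Gamma_Q)
\]
is equivalent to the stated description.

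\emph{Necessity.} From $|a_k|=|b_k|$ we immediately get $\{k:b_k\neq0\}=A$. For $k\in A$ put $c_k=b_k/a_k$, so $|c_k|=1$. If $k,\ell\in A$ are adjacent in $\Gamma_Q$, then
\[
a_k\overline{a_\ell}=b_k\overline{b_\ell}=c_k\overline{c_\ell}\,a_k\overline{a_\ell},
\]
and since $a_k\overline{a_\ell}\neq0$ this gives $c_k\overline{c_\ell}=1$, that is $c_k=c_\ell$. Propagating along paths inside the induced graph $\Gamma_Q(A)$ shows that $c_k$ is constant on each connected component $K$; call this value $c_K$.

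\emph{Sufficiency.} Suppose the supports agree and $b_k=c_Ka_k$ on each component $K$. Then $|b_k|=|a_k|$ for all $k$, which is \eqref{eq:thirdcond}. For an edge $k\sim\ell$ of $\Gamma_Q$ there are two cases.
\begin{itemize}
\item If $k$ or $\ell$ lies outside $A$, then both $a_k\overline{a_\ell}$ and $b_k\overline{b_\ell}$ vanish, so the equation holds.
\item If both $k,\ell\in A$, they are adjacent in $\Gamma_Q(A)$ and hence lie in the same component $K$. Then $b_k\overline{b_\ell}=|c_K|^2a_k\overline{a_\ell}=a_k\overline{a_\ell}$.
\end{itemize}
Hence \eqref{eq:secondcond} holds, and Step 1 gives that $u_0$ and $v_0$ are partners.

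The only delicate point is the bookkeeping: restricting to the induced subgraph on $A$ is essential, because phases cannot propagate through indices where $a_k=0$. Once \eqref{eq:secondcond} is rewritten as an edge condition on $\Gamma_Q$, everything else is routine. As a consequence, under \textup{(S)} the graph $\Gamma_Q(A)$ is complete, hence connected, and Theorem~\ref{thm:mainth} follows.
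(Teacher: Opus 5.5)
Your proof is correct and follows essentially the same route as the paper. You reduce via Step 1 to \eqref{eq:secondcond}--\eqref{eq:thirdcond}, propagate the unimodular ratio $b_k/a_k$ along edges of $\Gamma_Q(A)$ for necessity, and for sufficiency check the mixed equations case by case, noting they are trivial when an index lies outside $A$ or the pair is non-adjacent in $\Gamma_Q$. Defining $c_k=b_k/a_k$ at every $k\in A$ is just a slightly tidier bookkeeping of the paper's choice of a base point $k_0$ in each component.
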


\begin{proof} We have already shown that $u_0$ and $v_0$ are $\partner$-partners
if and only if the sequences $a=(a_k)_{k=1}^n$ and $b=(b_k)_{k=1}^n$ satisfy \eqref{eq:secondcond} and \eqref{eq:thirdcond}.
This last one implies that $a$ and $b$ have the same support $A$.
Let $K$ be a connected component of $\Gamma_Q(A)$, and choose $k_0\in K$.
Since $a_{k_0}$ and $b_{k_0}$ are nonzero and have the same modulus, there is
$c_K\in\mathbb T$ such that
\[
b_{k_0}=c_Ka_{k_0}.
\]
If $k\sim \ell$ in $\Gamma_Q(A)$, then for some $j\in V$,
$\phi_k(j)\phi_\ell(j)\neq0.$
Hence \eqref{eq:secondcond} reduces to
\begin{equation}
    \label{eq:secondcondbis} 
a_k\overline{a_\ell}
=
b_k\overline{b_\ell}.
\end{equation}
Thus if $b_\ell=c_Ka_\ell$ then $b_k=c_Ka_k$. In other words, the 
identity \eqref{eq:secondcondbis} propagates the phase factor $c_K$ along every path of $\Gamma_Q(A)$. Therefore,
along any path in $K$, one obtains
\[
b_k=c_Ka_k.
\]
Thus the phase factor is constant on each connected component of $\Gamma_Q(A)$.

Conversely, suppose that for every connected component $K$ of $\Gamma_Q(A)$
there exists $c_K\in\mathbb T$ such that
\[
b_k=c_Ka_k,
\qquad k\in K.
\]
Then $|a_k|=|b_k|$ for every $k$. If $k,\ell$ lie in the same connected
component, then
\[
b_k\overline{b_\ell}
=
c_Ka_k\overline{c_Ka_\ell}
=
a_k\overline{a_\ell}.
\]
If $k,\ell$ lie in different connected components of $\Gamma_Q(A)$, then
\[
\phi_k(j)\phi_\ell(j)=0
\qquad\text{for every } j\in V,
\]
so the mixed equations are trivial. The cases where one of the indices is
outside $A$ are also trivial. Hence the phaseless measurements agree.
\end{proof}

We now finish the proof of Theorem~\ref{thm:mainth}. By Condition \textup{(S)},
the graph $\Gamma_Q$ is complete. Hence, every induced subgraph $\Gamma_Q(A)$ is
connected whenever $A\neq\emptyset$. Proposition \ref{prop:classification}
therefore gives a single $c\in\mathbb T$ such that
\[
b_k=c a_k,
\qquad k=1,\ldots,n.
\]
Thus
\[
u_0=c v_0.
\]
Hence $u_0\trivial v_0$, and $H_Q$ does Schr\"odinger phase retrieval.


The classification also gives the generic-in-the-state version of the problem.

\begin{corollary}[Almost-everywhere phase retrieval]\label{cor:ae-phase-retrieval}
Assume that $H_Q$ satisfies the Conditions $B_2$  and \textup{(I)}.
Then the following are equivalent:
\begin{enumerate}
\item for Lebesgue-a.e. $u_0\in\C^V$, the only $\partner$-partners of $u_0$
are trivial;
\item the graph $\Gamma_Q$ is connected.
\end{enumerate}
Moreover, uniqueness for every $u_0\in\C^V$ holds if and only if $\Gamma_Q$ is
complete, equivalently if Condition \textup{(S)} holds.
\end{corollary}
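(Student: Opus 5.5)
Everything will follow from Proposition~\ref{prop:classification}, by analysing when the components of $\Gamma_Q(A)$ force a single phase. The coordinates $a=(a_k)$ of $u_0$ in the eigenbasis come from a unitary change of variables $u_0\mapsto a$. Hence a.e.\ statements about $u_0\in\C^V$ are the same as a.e.\ statements about $a\in\C^n$. For Lebesgue-a.e.\ $a$ every $a_k\neq0$, since the exceptional set is a finite union of complex hyperplanes. So for a.e.\ $u_0$ we have $A=\{1,\ldots,n\}$ and $\Gamma_Q(A)=\Gamma_Q$.

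\emph{(2)$\Rightarrow$(1).} Assume $\Gamma_Q$ is connected and $u_0$ has full support $A=\{1,\ldots,n\}$. Proposition~\ref{prop:classification} applied with the single component $K=\{1,\ldots,n\}$ gives $b=ca$ for some $c\in\T$ and every partner $v_0$. Thus every partner of $u_0$ is trivial, which gives (1).

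\emph{(1)$\Rightarrow$(2).} Suppose $\Gamma_Q$ is disconnected, with a component $K\neq\{1,\ldots,n\}$. For any $u_0$ of full support, set $b_k=-a_k$ for $k\in K$ and $b_k=a_k$ otherwise. By the converse direction of Proposition~\ref{prop:classification}, $v_0=\sum b_k\phi_k$ is a $\partner$-partner. It is not trivial: $b_k/a_k$ takes both values $-1$ (on $K$) and $1$ (off $K$), so no single $c\in\T$ works. Full-support vectors have positive (full) measure, so (1) fails.

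\emph{The ``moreover'' part.} If $\Gamma_Q$ is complete, then Theorem~\ref{thm:mainth} gives uniqueness for every $u_0$; its proof uses exactly that every induced subgraph $\Gamma_Q(A)$ with $A\neq\emptyset$ is connected. Conversely, suppose $k\not\sim\ell$ in $\Gamma_Q$ for some $k\neq\ell$. Take $u_0=\phi_k+\phi_\ell$, so $A=\{k,\ell\}$ and $\Gamma_Q(A)$ has the two components $\{k\}$ and $\{\ell\}$. Then $v_0=\phi_k-\phi_\ell$ is a partner by Proposition~\ref{prop:classification}, and it is not a global-phase multiple of $u_0$. Finally, the equivalence of completeness of $\Gamma_Q$ with Condition~\textup{(S)} is the definition of $\Gamma_Q$.

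The only point needing care is the measure-theoretic translation: the full-support set in $\C^n$ has full measure, and the unitary coordinate change preserves Lebesgue measure. Both are routine, so I expect no real obstacle.
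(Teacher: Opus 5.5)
Your proof is correct and follows the paper's argument: for a.e.\ $u_0$ the coefficient vector has full support, and both equivalences are then read off from Proposition~\ref{prop:classification}. The only difference is that you write out explicitly the nontrivial partners the paper leaves implicit, namely a sign flip on one component of $\Gamma_Q$, and $\phi_k\pm\phi_\ell$ for a pair $k\not\sim\ell$.
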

\begin{proof}
Write $ u_0=\sum_{k=1}^n a_k\phi_k$. 
For Lebesgue-a.e. $u_0$, all coefficients $a_k$ are nonzero; the exceptional set
is the finite union of the coordinate hyperplanes $\{a_k=0\}$. Thus, for almost
every $u_0$,
\[
A=\{1,\ldots,n\}.
\]
By Proposition~\ref{prop:classification}, the partners of $u_0$ are obtained by
choosing one unimodular constant on each connected component of $\Gamma_Q$.
Hence, almost-everywhere uniqueness is equivalent to the connectedness of
$\Gamma_Q$.

For uniqueness for every $u_0$, it is necessary that every nonempty induced graph
$\Gamma_Q(A)$  be connected. This is equivalent to $\Gamma_Q$ being complete,
which is Condition \textup{(S)}.
\end{proof}

\subsection{Examples}

We now give three examples. First, we give a full solution for two-vertex graphs
that will show the role of the potential. We will then give two examples
of graphs and potentials such that $H_Q$ satisfies Condition \textup{(S)}, does phase retrieval,
though only one of Conditions \textup{(I)} and $B_2$ holds.

\begin{example}[The two-vertex graph]\label{ex:2vertex}
We now consider the simplest nontrivial connected graph, namely the graph \(P_2\)
consisting of two vertices joined by one edge. This example has two purposes.
First, it gives an explicit Schr\"odinger operator satisfying the three hypotheses
\(B_2\),  \textup{(I)}, and   \textup{(S)} of Theorem~\ref{thm:mainth}. Second, it shows that
Condition  \textup{(I)} fails in the symmetric case, and that this failure leads to
non-uniqueness.

Let \(Q(1)=q_1\) and \(Q(2)=q_2\). Then
\[
\Delta_{P_2}
=
\begin{pmatrix}
1&-1\\
-1&1
\end{pmatrix},
\qquad
H_Q=\Delta_{P_2}+Q
=
\begin{pmatrix}
\alpha&-1\\
-1&\beta
\end{pmatrix},
\]
where $\alpha=1+q_1$, $\beta=1+q_2$.
The eigenvalues are
\[
\lambda_\pm
=
\frac{\alpha+\beta\pm\sqrt{4+(\alpha-\beta)^2}}{2}.
\]
The two eigenvalues are distinct,  and furthermore  $2\lambda_-<\lambda_-+\lambda_+<2\lambda_+$,
thus, Condition $B_2$ holds.

The corresponding real eigenvectors may be chosen as $\phi_\pm=(1,\alpha-\lambda_\pm)$.
These eigenvectors are not normalized, but this is harmless for checking
Conditions  \textup{(I)} and  \textup{(S)}, since normalizing only multiplies the columns of
\([\phi_k(j)^2]\) by nonzero constants. Condition  \textup{(S)} is satisfied since $\phi_+(1)=\phi_-(1)=1\neq 0$.

It remains to be noticed that
$$
\det[\phi_k(j)^2]=(\alpha-\lambda_-)^2-(\alpha-\lambda_+)^2
=\sqrt{4+(\alpha-\beta)^2}\,(\alpha-\beta).
$$
Thus Condition  \textup{(I)} holds if and only if $\alpha\not=\beta$, that is, if $q_1\not=q_2$.

On the other hand, it is not difficult to solve the system \eqref{eq:thereduction} if $q_1=q_2$.
Indeed, if we write $a=(a_1,a_2)=(|a_1|e^{i\ffi_1},|a_2|e^{i\ffi_2})$
then either $b\trivial a$ or $b\trivial (|a_2|e^{i\ffi_1},|a_1|e^{i\ffi_2})$.

We conclude that {\em on $P_2$, $H_Q$ does Schr\"odinger phase retrieval if and only if $Q$ is not
a constant potential.}
\end{example}
Our second example is the following:
\begin{example}[A graph satisfying \textup{(I)} and \textup{(S)}, but not
$B_2$, for which $H_Q$ does phase retrieval] 
Consider the star graph \(G=K_{1,3}\), with vertex \(1\) as its center, and let
\(Q=\operatorname{diag}(-4,-2,-1,1)\). Therefore, the corresponding Schrödinger operator is
\[
H_Q=\Delta_G+Q=
\begin{pmatrix}
-1&-1&-1&-1\\
-1&-1&0&0\\
-1&0&0&0\\
-1&0&0&2
\end{pmatrix}.
\]
Its characteristic polynomial is
\[
\det(\lambda I-H_Q)=\lambda^4-6\lambda^2+2,
\]
and hence, if we set
\[
\alpha=\sqrt{3+\sqrt7},
\qquad
\beta=\sqrt{3-\sqrt7},
\]
then the eigenvalues are simple and given by
\[
(\lambda_1,\lambda_2,\lambda_3,\lambda_4)=(-\alpha,-\beta,\beta,\alpha).
\]
 Then the complete difference table is
 \[
 \bigl(\lambda_j-\lambda_k\bigr)_{1\leq j,k\leq 4}
 =
 \begin{pmatrix}
0 & -(\alpha-\beta) & -(\alpha+\beta) & -2\alpha\\
\alpha-\beta & 0 & -2\beta & -(\alpha+\beta)\\
\alpha+\beta & 2 \beta & 0 & -(\alpha-\beta)\\
2\alpha & \alpha+\beta& \alpha-\beta & 0
\end{pmatrix}.
\]
In particular,
\[
\lambda_1+\lambda_4=0=\lambda_2+\lambda_3,
\]
while $\{1,4\}\neq\{2,3\}$. Hence, the spectrum does not satisfy Condition
$B_2$. 

 For each eigenvalue $\lambda_k$, an associated eigenvector may be chosen as
$$
\phi_k=\begin{pmatrix}
1& -\dfrac{1}{1+\lambda_k}&-\dfrac{1}{\lambda_k}&\dfrac{1}{2-\lambda_k}\end{pmatrix}.
$$
None of the eigenvalues equals $-1$, $0$, or $2$, so each $\phi_k$ has full
support. Hence Condition \textup{(S)} holds.
 
These eigenvectors are not normalized, but normalization only rescales the
columns of the squared-eigenvector matrix by nonzero constants.
Computing the determinant gives  $$\det\left(\phi_{k}(j)^2\right)_{1\leq j,k\leq 4} =
\frac{224\sqrt2}{3}\neq 0.$$
 Thus Condition \textup{(I)} also holds.

We now prove that $H_Q$ nevertheless does phase retrieval. Let $\dst u_0=\sum_{k=1}^4a_k\phi_k$.
For every $x\in V$, 
\[
\begin{aligned}
\left|e^{-itH_Q}u_0(x)\right|^2
={}&
\sum_{k=1}^4 |a_k|^2\phi_k(x)^2+2\operatorname{Re}\Bigl[
e^{i(\alpha-\beta)t}
\bigl(
a_1\overline{a_2}\phi_1(x)\phi_2(x)
+a_3\overline{a_4}\phi_3(x)\phi_4(x)
\bigr)\\
&\hspace{7mm}
+e^{i(\alpha+\beta)t}
\bigl(
a_1\overline{a_3}\phi_1(x)\phi_3(x)
+a_2\overline{a_4}\phi_2(x)\phi_4(x)
\bigr)\\
&\hspace{7mm}
+e^{2i\beta t}a_2\overline{a_3}\phi_2(x)\phi_3(x)
+e^{2i\alpha t}a_1\overline{a_4}\phi_1(x)\phi_4(x)
\Bigr].
\end{aligned}
\]

Notice that $0<2\beta<\alpha-\beta<\alpha+\beta<2\alpha$,
so the four positive frequencies appearing above are pairwise distinct. 

Now, let $v_0=\dst\sum_{j=1}^4b_j\phi_j$ and assume 
\[
\left|e^{-itH_Q}u_0(x)\right|^2
=
\left|e^{-itH_Q}v_0(x)\right|^2,
\qquad t\in[0,T],\quad x\in V.
\]
By the linear independence of the corresponding exponential functions, we
obtain the following identities:  
\begin{enumerate}
\renewcommand{\theenumi}{\roman{enumi}}

\item
For every $x\in V$,
\[
\sum_{k=1}^4|a_k|^2\phi_k(x)^2
=
\sum_{k=1}^4|b_k|^2\phi_k(x)^2.
\]
By Condition \textup{(I)}, this is equivalent to
\[
|a_k|=|b_k|,
\qquad k=1,\ldots,4.
\]

\item
For every $x\in V$,
\[
\begin{aligned}
&a_1\overline{a_2}\phi_1(x)\phi_2(x)
+a_3\overline{a_4}\phi_3(x)\phi_4(x) =
b_1\overline{b_2}\phi_1(x)\phi_2(x)
+b_3\overline{b_4}\phi_3(x)\phi_4(x).
\end{aligned}
\]
The matrix
\[
\begin{pmatrix}
\phi_1(1)\phi_2(1)&\phi_3(1)\phi_4(1)\\
\phi_1(2)\phi_2(2)&\phi_3(2)\phi_4(2)
\end{pmatrix}
\]
has determinant
\[
\frac{2}{3}(\alpha+\beta)\neq0.
\]
Therefore
\[
a_1\overline{a_2}=b_1\overline{b_2},
\qquad
a_3\overline{a_4}=b_3\overline{b_4}.
\]

\item
For every $x\in V$,
\[
\begin{aligned}
&a_1\overline{a_3}\phi_1(x)\phi_3(x)
+a_2\overline{a_4}\phi_2(x)\phi_4(x)
 =
b_1\overline{b_3}\phi_1(x)\phi_3(x)
+b_2\overline{b_4}\phi_2(x)\phi_4(x).
\end{aligned}
\]
The matrix
\[
\begin{pmatrix}
\phi_1(1)\phi_3(1)&\phi_2(1)\phi_4(1)\\
\phi_1(2)\phi_3(2)&\phi_2(2)\phi_4(2)
\end{pmatrix}
\]
 is invertible,  therefore 
\[
a_1\overline{a_3}=b_1\overline{b_3},
\qquad
a_2\overline{a_4}=b_2\overline{b_4}.
\]

\item
Comparing the coefficients of $e^{2i\beta t}$ and $e^{2i\alpha t}$, and
using the fact that all the eigenvectors have full support, gives
\[
a_2\overline{a_3}=b_2\overline{b_3},
\qquad
a_1\overline{a_4}=b_1\overline{b_4}.
\]
\end{enumerate}

Together with part \textup{(1)}, these identities give
\[
a_j\overline{a_k}=b_j\overline{b_k},
\qquad 1\leq j,k\leq4.
\]
Thus $aa^*=bb^*$. If $a=0$, then $b=0$. Otherwise, choosing an index $r$
such that $a_r\neq0$ shows that there exists $c\in\mathbb T$ such that
$b_k=ca_k$ for every $k$. Hence
\[
a\sim_{\mathbb T}b
\qquad\text{and therefore}\qquad
u_0\sim_{\mathbb T}v_0.
\]
Thus $H_Q$ does Schr\"odinger phase retrieval.
\end{example}

The next example provides a graph $G$ for which $\Delta_G$ satisfies
conditions $B_2$ and \textup{(S)} and does phase retrieval, although
condition \textup{(I)} fails.

\begin{example}[A graph satisfying $B_2$ and \textup{(S)}, but not
\textup{(I)}, for which $\Delta_G$ does phase retrieval] 
Consider the graph \(G=(V,E)\) defined by
\[
V=\{1,2,3,4,5\}
\quad\mbox{and}\quad
E=
\bigl\{
\{1,2\},\{2,3\},\{1,4\},\{1,5\}
\bigr\}.
\]
\begin{center}
    \begin{tikzpicture}[scale=1.0, every node/.style={circle, draw, minimum size=7mm}]
  \node (1) at (0,0) {$1$};
  \node (2) at (1.5,0) {$2$};
  \node (3) at (3,0) {$3$};
  \node (4) at (-1,1.2) {$4$};
  \node (5) at (-1,-1.2) {$5$};

  \draw (1) -- (2);
  \draw (2) -- (3);
  \draw (1) -- (4);
  \draw (1) -- (5);
\end{tikzpicture}
\end{center}
%
Its five eigenvalues are given by
\[
\lambda_1=0,
\quad
\lambda_2\approx0.518805696,
\quad
\lambda_3=1,
\quad
\lambda_4\approx2.311107817,
\quad
\lambda_5\approx4.170086487
\]
and are easily seen to form a $B_2$-set.
The corresponding eigenvectors are given by
\[
\begin{matrix}
\phi_1&=&\dfrac{1}{\sqrt{5}}&(&1&1&1&1&1&)\\
\phi_2&\approx&&(&
-0.201& 0.337& 0.702&-0.419&
-0.419&)\\
\phi_3&=&\dfrac{1}{\sqrt{2}}&(&0&0&0&1&-1&)\\
\phi_4&=&&(&-0.317&-0.703& 0.536&
 0.242& 0.242&)\\
 \phi_5&=&&(&-0.811& 0.437&
-0.138& 0.255& 0.255&)\\
\end{matrix}.
\]
Note that Condition $(S)$ can be seen in the last two coordinates of those vectors, while
failure of Condition $(I)$
comes from those coordinates.

Now let 
$\dst
u_0=\sum_{k=1}^5 a_k\phi_k\neq0$ and 
$\dst v_0=\sum_{k=1}^5 b_k\phi_k\neq0$ 
be $e^{-it\Delta_G}$-partners.
The proof of Theorem \ref{thm:mainth}
gives us that, 
\[
\sum_{k=1}^5
\bigl(|a_k|^2-|b_k|^2\bigr)\phi_k(j)^2=0,\qquad j\in V,
\]
and
\[
a_k\overline{a_\ell}
=
b_k\overline{b_\ell},
\qquad k\neq\ell.
\]

Set $A=\{k:\ a_k\neq0\}$ and 
$B=\{k\,:\ b_k\not=0\}$, the supports of $a$ and $b$, respectively.
We now distinguish three cases.

\medskip

\noindent
\textbf{Case 1: \(\boldsymbol{|A|=1}\).}

In this case, when $k\not=\ell$
$
b_k\overline{b_\ell}=a_k\overline{a_\ell}=0
$
so that $|B|=1$ as well.
Write $A=\{r\}$, $B=\{s\}$.
Then
\begin{equation}
    \label{eq:newexample}
\sum_{k=1}^5
\bigl(|a_k|^2-|b_k|^2\bigr)\phi_k(j)^2=
|a_r|^2\phi_r(j)^2-|b_s|^2\phi_s(j)^2
=0
\end{equation}
for all $j$.
But then, as the $\phi_k$'s are normalized,
$$
|a_r|^2=|a_r|^2\sum_{j=1}^5\phi_r(j)^2
=|b_s|^2\sum_{j=1}^5\phi_s(j)^2
=|b_s|^2.
$$
Going back to \eqref{eq:newexample},
we obtain $\phi_r(j)^2=\phi_s(j)^2$
for all $j$, which is not the case when $r\not=s$ for this graph.
We conclude that $u_0=a_r\phi_r$,
$v_0=b_r\phi_r$ with $|a_r|=|b_r|$
thus $u_0\trivial v_0$.

Of course, we may exchange the roles
of $A$ and $B$, thus uniqueness also holds when $|B|=1$.
We can thus assume that $|A|,|B|\geq 2$. Note that this implies $A=B$.
Indeed, if   $k,\ell\in A$ and $k\neq\ell$ then,  from $b_k\overline{b_\ell}=a_k\overline{a_\ell}\not=0$, 
we get that $k,\ell\in B$
and conversely.

\medskip

\noindent
\textbf{Case 2: \(\boldsymbol{|A|=2}\).}

Write $A=B=\{r,s\}$, $r\not=s$. We then have
\[
\bigl(|a_r|^2-|b_r|^2\bigr)
\bigl(\phi_r(k)^2\bigr)_{k=1}^5
+
\bigl(|a_s|^2-|b_s|^2\bigr)
\bigl(\phi_s(k)^2\bigr)_{k=1}^5
=0.
\]
But from the above expressions,
the vectors $\bigl(\phi_r(k)^2\bigr)_{k=1}^5$
and $\bigl(\phi_s(k)^2\bigr)_{k=1}^5$
are linearly independent, so that
$|a_r|^2=|b_r|^2$, $|a_s|^2=|b_s|^2$
and, as $a_k=b_k=0$ when $k\notin A$,
we obtain that
\[
a_k\overline{a_\ell}
=
b_k\overline{b_\ell}
\]
holds for all $k,\ell$, even when $k=\ell$. This implies that $a\trivial b$ thus $u_0\trivial v_0$.

\medskip

\noindent
\textbf{Case 3: \(\boldsymbol{|A|\geq3}\).}

The identities
$a_k\overline{a_\ell}
=
b_k\overline{b_\ell}$ imply that $|a_ka_\ell|=|b_kb_\ell|\not=0$ whenever $k,\ell\in A=B$ and $k\neq\ell$.

Fix $k\in A$ and choose distinct indices
$\ell,m\in A\setminus\{k\}$. 
 Then 
$$
|a_k|^2
=
\frac{|a_k\overline{a_\ell}|\,
      |a_k\overline{a_m}|}
     {|a_\ell\overline{a_m}|}
=
\frac{|b_k\overline{b_\ell}|\,
      |b_k\overline{b_m}|}
     {|b_\ell\overline{b_m}|}
=|b_k|^2.
$$
It follows that 
$a_k\overline{a_\ell}
=
b_k\overline{b_\ell},
$ 
also holds when $k=\ell\in A$.

Of course, the identities reduce to \(0=0\)  when $k=\ell\notin A$.
We thus proved that
$$
a_k\overline{a_\ell}
=
b_k\overline{b_\ell},
\qquad k,\ell\in V.
$$
This implies that $a\trivial b$, and 
thus $u_0\trivial v_0$.
\end{example}

The last case can be generalized to more general graphs. Indeed, let
$G=(V,E)$ be a graph with $|V|\geq 3$
such that $\Delta_G$ satisfies Conditions $B_2$ and $(S)$.
Note that such a graph is connected
since $0$ is then a simple eigenvalue.
Let $(\phi_k)_{k=1}^n$ be an orthonormal basis of eigenvectors of $\Delta_G$.
Let $u_0=\dst\sum_{k=1}^na_k\phi_k\not=0$
and $v_0=\dst\sum_{k=1}^nb_k\phi_k\not=0$
be $e^{-it\Delta_G}$-partners.
Let $A=\{k:\ a_k\neq0\}$,
$B=\{k\,:\ b_k\not=0\}$ be the supports of $a$ and $b$.
 Then $a,b$ satisfy the equations 
\begin{equation}
    \label{eq:mixedtermsblabla}
a_k\overline{a_\ell}
=
b_k\overline{b_\ell},
\qquad k\not=\ell\in V.
\end{equation}

But then Case 3 in the above proof shows that, if $|A|\geq 3$, $A=B$
and further \eqref{eq:mixedtermsblabla} also
holds when $k=\ell$ which implies
$a\trivial b$.
This shows the following:

\begin{theorem}\label{thm:almostall}
    Let $G$ be a finite graph such
    that $\Delta_G$ satisfies Conditions $B_2$ and $(S)$. Assume that $|V|\geq 3$.
    Then  for Lebesgue-almost every $u_0\in\C^V$,
    if $v_0$ is an  
 $e^{-it\Delta_G}$-partner of $u_0$,  then $v_0\trivial u_0$.
\end{theorem}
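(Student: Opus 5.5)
The plan is to reduce the problem, via Step~1 of the proof of Theorem~\ref{thm:mainth}, to the identities on the coefficients $a,b$ in the eigenbasis, and then to reproduce the argument of Case~3 of the preceding example in general. Here $H_Q=\Delta_G$ (with $Q=0$), and $(\phi_k)$ denotes a real orthonormal eigenbasis. Condition $B_2$ together with Lemma~\ref{lem:linind} shows that if $u_0=\sum a_k\phi_k$ and $v_0=\sum b_k\phi_k$ are $e^{-it\Delta_G}$-partners, then \eqref{eq:secondcond} holds. Condition (S) gives, for every pair $k\neq\ell$, some $j$ with $\phi_k(j)\phi_\ell(j)\neq0$. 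Dividing by this product yields \eqref{eq:mixedtermsblabla}, namely $a_k\overline{a_\ell}=b_k\overline{b_\ell}$ for all $k\neq\ell$. The diagonal identity \eqref{eq:firstcond} is not used; this matters, since Condition (I) fails for $\Delta_G$.

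Next I would discard a null set of initial states. The map $u_0\mapsto a$ is a unitary change of coordinates, so for Lebesgue-almost every $u_0$ every $a_k$ is nonzero. Hence $A=\{1,\dots,n\}$ and $|A|=n\geq3$. This is where the hypothesis $|V|\geq3$ enters.

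For such $u_0$ and any partner $v_0$, let $k\neq\ell$. Then $b_k\overline{b_\ell}=a_k\overline{a_\ell}\neq0$, so every $b_k$ is nonzero as well. For fixed $k$, pick distinct $\ell,m\neq k$, which is possible because $n\geq3$. Taking moduli gives
\[
|a_k|^2=\frac{|a_k\overline{a_\ell}|\,|a_k\overline{a_m}|}{|a_\ell\overline{a_m}|}=\frac{|b_k\overline{b_\ell}|\,|b_k\overline{b_m}|}{|b_\ell\overline{b_m}|}=|b_k|^2.
\]
Therefore $a_k\overline{a_\ell}=b_k\overline{b_\ell}$ for all $k,\ell$, including $k=\ell$, that is, $aa^*=bb^*$.

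Finally, set $c=b_1/a_1$. We have $|c|=1$, and $b_k=b_1\overline{b_1}^{-1}\cdots$; more directly, $\overline{b_1}b_k=\overline{a_1}a_k$, so $b_k=(\overline{a_1}/\overline{b_1})a_k=ca_k$, using $|a_1|=|b_1|$. Hence $v_0=cu_0$.

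The only delicate point is ensuring that no use of Condition (I) sneaks in. The three-index modulus trick replaces it, so I expect no real obstacle beyond checking that the exceptional set is indeed a finite union of hyperplanes, which is clear.
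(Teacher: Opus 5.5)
Your proposal is correct and is essentially the paper's own argument. Condition $B_2$ with Lemma~\ref{lem:linind} and Condition (S) give the mixed identities \eqref{eq:mixedtermsblabla} without any use of (I); on the full-measure set where every $a_k\neq0$ you have $|A|=n\geq3$, and the three-index modulus trick of Case~3 then recovers $|a_k|=|b_k|$, hence $aa^*=bb^*$ and $v_0=cu_0$. Delete the abandoned fragment ``$b_k=b_1\overline{b_1}^{-1}\cdots$''; the computation $b_k=(\overline{a_1}/\overline{b_1})a_k=ca_k$ that follows it is complete on its own.
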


\subsection{Finite time sampling}

In the proof of Theorem~\ref{thm:mainth}, the continuous time interval
$[0,T]$ was used only to separate finitely many exponential functions.
This observation allows us to replace the full interval of observation times
by a finite set of time samples.

Let $\mu_1,\ldots,\mu_N$ be distinct real numbers. Choose $h>0$ such that
$(N-1)h\leq T$
and such that the numbers $e^{i\mu_rh}$ are pairwise distinct. This is possible,
since the forbidden values of $h$ form a discrete set. Then the Vandermonde
matrix $\bigl[e^{i\mu_rh(m-1)}\bigr]_{r,m=1}^N$ is invertible. Therefore
\[
\sum_{r=1}^N c_r e^{i\mu_rt}=0
\quad\text{for }t=0,h,\ldots,(N-1)h
\]
implies $c_1=\cdots=c_N=0$. Thus, replacing Lemma \ref{lem:linind} by this fact in the proof of Theorem
\ref{thm:mainth}, we obtain the following:

\begin{proposition}[Finite time sampling]\label{prop:finite-time-sampling}
Assume that $H_Q$ satisfies Condition $B_2$. Then there exists a finite set
of times $\{t_1,\ldots,t_N\}\subset[0,T]$ such that the full data
$$
\bigl(|e^{-itH_Q}u(j)|\bigr)_{t\in[0,T],\,j\in V}
$$
is determined by the sampled data 
$$
\bigl(|e^{-it_mH_Q}u(j)|\bigr)_{m=1,\ldots,N,\,j\in V}.
$$
\end{proposition}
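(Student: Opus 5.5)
The plan is to write the data at each vertex as a trigonometric polynomial in $t$ with a fixed, finite set of frequencies, and then show that this polynomial is determined by finitely many equispaced samples. As in Step 1 of the proof of Theorem~\ref{thm:mainth}, for $u=\sum_k a_k\phi_k$ and each $j\in V$ we have
\[
F_j(t):=|e^{-itH_Q}u(j)|^2=\sum_{k,\ell=1}^n a_k\overline{a_\ell}\,\phi_k(j)\phi_\ell(j)\,e^{-i(\lambda_k-\lambda_\ell)t}.
\]
The frequencies occurring here all lie in the difference set $D=\{\lambda_k-\lambda_\ell:\ 1\le k,\ell\le n\}$. Condition $B_2$ guarantees $\lambda_k-\lambda_\ell=\lambda_{k'}-\lambda_{\ell'}$ with $k\neq\ell$ only if $(k,\ell)=(k',\ell')$, since otherwise $\lambda_k+\lambda_{\ell'}=\lambda_{k'}+\lambda_\ell$ would force $\{k,\ell'\}=\{k',\ell\}$. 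So $D$ consists of $0$ together with $n(n-1)$ distinct nonzero values, giving $N=n^2-n+1$ elements $\mu_1,\dots,\mu_N$. We regroup $F_j(t)=\sum_{r=1}^N c_r^{(j)}e^{-i\mu_r t}$, where the coefficients $c^{(j)}$ depend on $u$ but the frequencies do not.

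Next we choose the sampling times independently of $u$. Pick $h>0$ with $(N-1)h\le T$ such that the numbers $e^{-i\mu_r h}$ are pairwise distinct. This requires $(\mu_r-\mu_s)h\notin2\pi\mathbb Z$ for $r\ne s$, which excludes only a discrete set of $h$, so such an $h$ exists in $(0,T/(N-1)]$. Set $t_m=(m-1)h$ for $m=1,\dots,N$. The matrix $W=\bigl[e^{-i\mu_r t_m}\bigr]_{m,r}$ is then a Vandermonde matrix in the distinct nodes $e^{-i\mu_r h}$, hence invertible.

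Finally, given the sampled data $|e^{-it_mH_Q}u(j)|$, we know $F_j(t_m)$ for all $m$. The coefficient vector is recovered as $c^{(j)}=W^{-1}\bigl(F_j(t_m)\bigr)_{m}$, and this reconstructs $F_j(t)$, hence $|e^{-itH_Q}u(j)|=\sqrt{F_j(t)}$, for every $t\in[0,T]$. Equivalently, two states with equal sampled data have equal full data. This is the statement: the set $\{t_1,\dots,t_N\}$ depends only on the spectrum of $H_Q$ and on $T$.

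The only point needing care is to fix the time set before $u$ is given, so that a single finite set works for all states. This is why we use the full difference set $D$ rather than only the frequencies that actually appear for a particular $u$. Everything else is the finite-dimensional linear algebra above. $B_2$ was used only to count the elements of $D$; in fact simplicity of the spectrum, or even nothing at all, suffices for the sampling step itself.
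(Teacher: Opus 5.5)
Your proof is correct and follows essentially the same route as the paper: equispaced samples $t_m=(m-1)h$, with $h$ chosen so that the nodes $e^{i\mu_r h}$ are pairwise distinct, give an invertible Vandermonde system that recovers the exponential coefficients of $|e^{-itH_Q}u(j)|^2$. Your two additions are both accurate: you fix the frequency set as the full difference set (independent of $u$), and you observe that $B_2$ is not actually needed for the sampling step.
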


\subsection{A frame-measurement variant}

The argument above also applies when the vertex measurements are replaced by
measurements against a fixed frame and when the Schr\"odinger operator
is replaced by a more general symmetric operator $H$ on an $n$-dimensional Hilbert space
$\mathcal H$. We include this variant for completeness.

Let $\Psi=(\psi_m)_{m=1}^M$ be a frame for $\mathcal H$.
We denote by $(\phi_k)_{k=1}^n$ the orthonormal eigenbasis of $H$ and the corresponding eigenvalues by
$(\lambda_k)_{k=1}^n$. 
Set
\[
\gamma_{k,m}=\langle \phi_k,\psi_m\rangle .
\]
We say that $H$ satisfies Condition $\textup{(I)}_\Psi$ relative to $\Psi$ if the
matrix $A_\Psi=\big(|\gamma_{k,m}|^2\big)_{k,m}$
has rank $n$. We say that $H$ satisfies Condition $\textup{(S)}_\Psi$ relative
to $\Psi$ if, for every $k\neq \ell$, there exists $m$ such that
$\gamma_{k,m}\overline{\gamma_{\ell,m}}\neq 0$.

With these notations, we have

\begin{theorem}[Dynamical phase retrieval from frame measurements]
\label{thm:frame-measurement}
Let $\Psi=(\psi_m)_{m=1}^M$ be a frame.
Let $H$ be as above, and assume that its eigenvalues satisfy Condition $B_2$
and that its eigenvectors satisfy Conditions $\textup{(I)}_\Psi$
and $\textup{(S)}_\Psi$ relative to $\Psi$. If $u_0,v_0\in\mathcal H$ satisfy
\begin{equation}
    \label{eq:probframe}
|\langle e^{-itH}u_0,\psi_m\rangle|
=
|\langle e^{-itH}v_0,\psi_m\rangle|,
\qquad
t\in[0,T],\quad m=1,\dots,M,
\end{equation}
then $u_0\sim_{\mathbb T} v_0$.
\end{theorem}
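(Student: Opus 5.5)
The argument follows the proof of Theorem~\ref{thm:mainth}, with the coordinate functionals $\delta_j$ replaced by the frame vectors $\psi_m$. First I expand $u_0=\sum_k a_k\phi_k$ and $v_0=\sum_k b_k\phi_k$ in the orthonormal eigenbasis of $H$. Then
\[
\langle e^{-itH}u_0,\psi_m\rangle=\sum_{k=1}^n a_ke^{-i\lambda_kt}\gamma_{k,m},
\]
and squaring the moduli, hypothesis \eqref{eq:probframe} becomes
\[
\sum_{k,\ell=1}^n\bigl(a_k\overline{a_\ell}-b_k\overline{b_\ell}\bigr)e^{-i(\lambda_k-\lambda_\ell)t}\gamma_{k,m}\overline{\gamma_{\ell,m}}=0,
\qquad t\in[0,T],\ m=1,\dots,M.
\]

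Next I use Condition $B_2$. It says that $\lambda_k-\lambda_\ell=\lambda_{k'}-\lambda_{\ell'}$ with $k\neq\ell$ forces $(k,\ell)=(k',\ell')$, since $\lambda_k+\lambda_{\ell'}=\lambda_{k'}+\lambda_\ell$. So the frequencies $\lambda_k-\lambda_\ell$ with $k\neq\ell$ are pairwise distinct and all differ from $0$. By Lemma~\ref{lem:linind}, each coefficient then vanishes separately, which gives two sets of equations:
\[
\sum_{k}\bigl(|a_k|^2-|b_k|^2\bigr)|\gamma_{k,m}|^2=0\quad(m=1,\dots,M),
\qquad
\bigl(a_k\overline{a_\ell}-b_k\overline{b_\ell}\bigr)\gamma_{k,m}\overline{\gamma_{\ell,m}}=0\quad(k\neq\ell).
\]
The first set says that the vector $(|a_k|^2-|b_k|^2)_k$ lies in the kernel of $A_\Psi^{T}$, an $M\times n$ matrix. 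Since $A_\Psi$ has rank $n$ by $\textup{(I)}_\Psi$, that kernel is trivial, so $|a_k|=|b_k|$ for every $k$. For the second set, fix $k\neq\ell$; Condition $\textup{(S)}_\Psi$ gives an index $m$ with $\gamma_{k,m}\overline{\gamma_{\ell,m}}\neq0$, and hence $a_k\overline{a_\ell}=b_k\overline{b_\ell}$.

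The conclusion is the phase propagation of Proposition~\ref{prop:classification}, now on a complete graph. If $a=0$ then $b=0$ because $|a_k|=|b_k|$. Otherwise I pick $r$ with $a_r\neq0$ and set $c=b_r/a_r\in\T$. For any $k\neq r$, the relation $b_k\overline{b_r}=a_k\overline{a_r}$ gives $b_k\,\overline{c}\,\overline{a_r}=a_k\overline{a_r}$, so $b_k=ca_k$. Therefore $b=ca$, and $v_0=cu_0$.

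The only step needing care is the frequency separation. One must check that $B_2$ (taken, as in the paper, to include the diagonal sums $2\lambda_k$) makes all off-diagonal differences distinct from each other and from $0$, so that no two mixed terms merge. The rank step needs only that $A_\Psi$ has rank $n$, even when $M>n$, since it is used to get injectivity of $A_\Psi^{T}$ on $\R^n$.
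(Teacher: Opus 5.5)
Your proposal is correct and follows essentially the same route as the paper, which simply states that the proof of Theorem~\ref{thm:mainth} applies verbatim with $\phi_k(j)$ replaced by $\gamma_{k,m}$. You have written out that adaptation in full: frequency separation from $B_2$, injectivity of $A_\Psi^{T}$ from $\textup{(I)}_\Psi$, and direct phase propagation on the complete graph from $\textup{(S)}_\Psi$.
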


Theorem \ref{thm:mainth} is the special case $\psi_m=\delta_m$. Then
\[
\gamma_{k,m}=\langle \phi_k, \psi_m \rangle=\phi_k(m),
\]
so Condition $\textup{(I)}_\Psi$ is precisely the invertibility of the squared
eigenvector matrix, {\em i.e.,} Condition \textup{(I)}, and Condition $\textup{(S)}_\Psi$ is the support condition
for pairs of eigenvectors  \textup{(S)}. 
The proof of Theorem~\ref{thm:mainth} applies verbatim.

Condition $\textup{(I)}_\Psi$  is used to guarantee that, if $u_0=\dst\sum_{k=1}^na_k\phi_k$
and $v_0=\dst\sum_{k=1}^nb_k\phi_k$ are such that \eqref{eq:probframe} holds, then
$|a_k|=|b_k|$ for all $k$.
This is automatically satisfied when the frame $\Psi$ contains
    the eigenbasis $(\phi_k)_{k=1}^n$ since
    $$
    |a_k|=|\langle e^{-itH}u_0,\phi_k\rangle|=|\langle e^{-itH}v_0,\phi_k\rangle|=|b_k|.    
$$

\subsection{A reformulation of Condition  \textup{(I)} when the eigenvalues are simple}\label{sec:reform}

Assume that $H_Q$ has simple eigenvalues. 
Let
$(\phi_k)_{k=1}^n$ be a real orthonormal basis of eigenvectors of $H_Q$,
with corresponding eigenvalues $(\lambda_k)_{k=1}^n$. Define
\[
B_Q=\bigl[(H_Q^{m-1})_{j,j}\bigr]_{1\leq j,m\leq n},
\]
that is, the $(j,m)$-entry of $B_Q$ is the $j$-th diagonal entry of
$H_Q^{m-1}$.  

By the spectral decomposition,  
$$ 
H_Q=\sum_{k=1}^n\lambda_k\phi_k\phi_k^T\quad\mbox{thus}\quad
H_Q^\ell=\sum_{k=1}^n\lambda_k^\ell\phi_k\phi_k^T
\quad\mbox{for any }\ell\geq 0.
$$

Taking the $j$-th diagonal entry gives
\[
(H_Q^\ell)_{j,j}
=
\sum_{k=1}^n \lambda_k^\ell \phi_k(j)^2.
\]  
Hence
\[
B_Q=MV,
\quad\mbox{where}\quad
M=\bigl[\phi_k(j)^2\bigr]_{1\leq j,k\leq n},
\qquad
V=\bigl[\lambda_k^{m-1}\bigr]_{1\leq k,m\leq n}.
\]

Since the eigenvalues are simple, the Vandermonde matrix $V$ is invertible.
 Therefore $M$ is invertible if and only if $B_Q$ is invertible. 
In other
words, under the simple-spectrum assumption, Condition \textup{(I)} is
equivalent to the invertibility of the matrix
\[
B_Q=\bigl[(H_Q^{m-1})_{j,j}\bigr]_{1\leq j,m\leq n}.
\]
This gives a reformulation of Condition \textup{(I)} that does not require
computing the eigenvectors.

\section{Explicit and generic perturbations with diagonal potentials for pha\-se retrieval }\label{sec:diagonal-potentials}

The spectral criterion becomes useful once one knows that the three spectral
conditions can actually occur. In this section,  we prove this in two ways. First
we give an explicit {\it universal} diagonal potential on every connected graph. We then
show that the same conditions hold generically under diagonal perturbations of the Laplacian.

\subsection{An explicit good potential on connected graphs}

Let $G=(V,E)$ be a connected graph with $|V|=n$. Fix a vertex $a\in V$ and
label the vertices 
\[
V=\{v_1,\ldots,v_n\},\qquad v_1=a,
\]
so that
\[
\dist(a,v_i)\leq \dist(a,v_j)
\qquad\text{whenever } i<j.
\]
Let $A$ be the adjacency matrix and let $D_G$ be the diagonal degree matrix, so
that
\[
\Delta_G=D_G-A.
\]
For $\rho>1$, set
\[
\Lambda_\rho=\diag(\rho,\rho^2,\ldots,\rho^n),
\qquad
Q_\rho=\Lambda_\rho-D_G.
\]
Then
\[
H_{Q_\rho}
=
\Delta_G+Q_\rho
=
\Lambda_\rho-A.
\]

\begin{proposition}\label{prop:explicit-good-potential-connected}
There exists $R=R(G)>0$ such that, for every $\rho>R$, the operator
\[
H_{Q_\rho}=\Lambda_\rho-A
\]
satisfies Conditions $B_2$--\textup{(I)}--\textup{(S)}.
\end{proposition}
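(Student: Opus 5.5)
Treat $H_{Q_\rho}=\Lambda_\rho-A$ as a perturbation of the diagonal matrix $\Lambda_\rho$ and work in the normalized form $\rho^{-n}H$ or, more conveniently, directly with the large parameter $\rho$. Since $\|A\|$ is bounded independently of $\rho$ and the diagonal entries $\rho^i$ are separated by gaps $\rho^{i+1}-\rho^i\geq \rho(\rho-1)\to\infty$, standard perturbation theory (Gershgorin plus simplicity) gives eigenvalues $\lambda_i=\rho^i+O(1)$, more precisely $\lambda_i=\rho^i-\sum_{j\sim v_i}\frac{1}{\rho^j-\rho^i}+O(\rho^{-2})$, and each $\lambda_i$ is simple for $\rho$ large. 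Condition $B_2$ then follows: $\lambda_j+\lambda_k=\rho^j+\rho^k+O(1)$, and for $\{j,k\}\neq\{j',k'\}$ the values $\rho^j+\rho^k$ and $\rho^{j'}+\rho^{k'}$ differ by at least roughly $\rho^{\max}-2\rho^{\max-1}\geq c\rho$ (uniqueness of base-$\rho$ representations with digits $0,1,2$), which dominates the $O(1)$ errors once $\rho>R$.

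For the eigenvectors, expand $\phi_i$ via the Neumann series / resolvent: writing $\phi_i=e_{v_i}+w_i$, the equation $(\Lambda_\rho-\lambda_i)\phi_i=A\phi_i$ gives, coordinatewise, $\phi_i(v_m)=\frac{(A\phi_i)(v_m)}{\rho^m-\lambda_i}$ for $m\neq i$. Iterating along paths, I expect $\phi_i(v_m)$ to be, to leading order, a sum over shortest walks from $v_i$ to $v_m$ of products $\prod \frac{1}{\rho^{p}-\rho^{i}}$ over the intermediate and final vertices, hence nonzero with an explicit leading monomial in $\rho^{-1}$ once one checks no cancellation (all terms along shortest paths have the same sign structure, since each factor $\frac{1}{\rho^p-\lambda_i}$ has a definite sign depending on whether $p>i$ or $p<i$, and the product over a path has a sign determined only by the set of visited indices; I would argue the dominant term comes from a single geodesic or that equal-order terms share a sign). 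This gives full support of every $\phi_i$ for large $\rho$ because $G$ is connected, hence Condition (S).

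For Condition (I), use the matrix $M=[\phi_k(v_j)^2]$: its diagonal entries are $1+o(1)$ and off-diagonal entries are $o(1)$ as $\rho\to\infty$ (each off-diagonal $|\phi_k(v_j)|\lesssim \rho^{-1}$ or smaller since every denominator $|\rho^m-\lambda_k|\gtrsim\rho(\rho-1)$). So $M\to I$ and is invertible for $\rho$ large. Taking $R$ as the maximum of the thresholds from the three steps finishes the proof.

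The main obstacle is Condition (S), i.e. proving every coordinate of every eigenvector is nonzero, since higher-order contributions could in principle cancel the leading term. I would first try to isolate, for each pair $(i,m)$, the unique minimal-order term in $\rho^{-1}$: the ordering of vertices by distance from $a$ is presumably chosen so that this leading exponent is attained along an essentially canonical path and all competing walks are of strictly smaller order; if that fails, fall back on a sign argument showing all leading-order contributions share a sign.
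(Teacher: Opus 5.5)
\textbf{The proof of \textup{(S)} has a gap.} Your treatment of $B_2$ and \textup{(I)} is fine and matches the paper in substance. You localize $\lambda_j=\rho^j+O(1)$, separate the numbers $\rho^a+\rho^b$ by at least $\rho(\rho-2)$, and show the squared-eigenvector matrix is a small perturbation of the identity. The paper phrases that last step as strict diagonal dominance plus Levy--Desplanques, which is equivalent.

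For \textup{(S)} you try to prove that every eigenvector has full support. The mechanism you propose is false in general: that the leading term of $\phi_i(v_m)$ is a sum over shortest walks whose contributions share a sign. A walk $v_i=w_0,w_1,\ldots,w_L=v_m$ in the Neumann expansion contributes $\prod_{s=1}^{L}(\rho^{\ell(w_s)}-\lambda_i)^{-1}$, where $\ell$ is the label. This contribution has size $\rho^{-\sum_s\max(\ell(w_s),i)}$ and sign $(-1)^{\#\{s:\ell(w_s)<i\}}$. So a longer walk through low-labelled vertices can be of the same order as a shorter walk through a high-labelled vertex, with the opposite sign.

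Here is a concrete case. Take $v_1,\ldots,v_8$ with edges $v_1v_2,v_1v_3,v_1v_4,v_2v_5,v_2v_6,v_3v_7,v_4v_8,v_7v_8$; this labeling is admissible, with distances from $a=v_1$ equal to $0,1,1,1,2,2,2,2$. For $\lambda_3\approx\rho^3$:
\begin{itemize}
\item the geodesic $v_3v_7v_8$ contributes $+\rho^{-15}(1+o(1))$ to $\tilde\phi_3(v_8)$;
\item the longer path $v_3v_1v_4v_8$ contributes $-\rho^{-15}(1+o(1))$.
\end{itemize}
The leading terms cancel, and only at the next order does one see $\tilde\phi_3(v_8)\approx-\rho^{-16}$. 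So neither ``a single dominant geodesic'' nor ``equal-order terms share a sign'' holds. Proving full support this way would require controlling cancellations to arbitrary order, which your proposal does not do.

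\textbf{The missing idea.} Condition \textup{(S)} only asks that supports intersect pairwise. So it suffices to exhibit \emph{one} vertex at which every eigenvector is nonzero. The distance-from-$a$ labeling is designed for exactly one such vertex, the root $v_1=a$. Fix $j\geq2$ and let $r=\dist(v_1,v_j)$.
\begin{itemize}
\item For every $p\neq j$ one has $|\rho^p-\lambda_j|^{-1}=O(\rho^{-j})$, so any walk of length $L$ contributes $O(\rho^{-jL})$.
\item The walks of minimal length $r$ are exactly the shortest paths from $v_j$ to $v_1$.
\item Every vertex on such a path other than $v_j$ is strictly closer to $v_1$, hence has label $<j$, so each factor equals $-\rho^{-j}(1+o(1))$.
\end{itemize}
This gives $\tilde\phi_j(v_1)=(-1)^rN_j\rho^{-jr}+o(\rho^{-jr})\neq0$, where $N_j\geq1$ is the number of geodesics. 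This is the paper's argument. Restricting your walk expansion to the single coordinate $v_1$ repairs your proof.
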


\begin{proof}
Write
\[
H_\rho:=\Lambda_\rho-A,
\qquad
M:=\|A\|, 
\]
where $\| \cdot \|$ is the spectral norm.    
Then $M\leq n-1$. We may label the eigenvalues $(\lambda_j)_{j=1}^n$ of $H_\rho$
increasingly, and then
\[
|\lambda_j-\rho^j|\leq M,
\qquad j=1,\ldots,n
\]
by Weyl's inequality (see \cite[Theorem 4.3.1]{HornJohnson}). For $\rho$ large, the intervals
\[
[\rho^j-M,\rho^j+M],\qquad j=1,\ldots,n,
\]
are pairwise disjoint, and hence the eigenvalues are simple and ordered
according to the diagonal entries.

\smallskip

\noindent\emph{Condition $B_2$.}
Suppose
\[
\lambda_j+\lambda_k=\lambda_{j'}+\lambda_{k'}.
\]
Then
\[
\bigl|(\rho^j+\rho^k)-(\rho^{j'}+\rho^{k'})\bigr|
\leq 4M.
\]
On the other hand, two distinct numbers of the form
\[
\rho^a+\rho^b,\qquad 1\leq a\leq b\leq n,
\]
differ by at least $\rho(\rho-2)$ for $\rho\geq 3$. Indeed, assume
\[
\rho^a+\rho^b<\rho^c+\rho^d,
\qquad a\leq b,\quad c\leq d.
\]
If $b<d$, then
\[
\rho^c+\rho^d-(\rho^a+\rho^b)
\geq
\rho^d-\rho^b-\rho^a
\geq
\rho^{b+1}-2\rho^b
=
\rho^b(\rho-2)
\geq
\rho(\rho-2).
\]
If $b=d$, then $a<c$, and
\[
\rho^c+\rho^d-(\rho^a+\rho^b)
=
\rho^c-\rho^a
\geq
\rho^a(\rho-1)
\geq
\rho(\rho-1).
\]
Choosing $\rho$ so large that $\rho(\rho-2)>4M$, we get $\{j,k\}=\{j',k'\}$.
Thus the spectrum of $H_\rho$ satisfies Condition $B_2$.

\smallskip

\noindent\emph{Condition  \textup{(I)}.}
Let $\phi_j$ be a real normalized eigenvector of $H_\rho$ associated with $\lambda_j$.
Write
\[
\phi_j=\alpha_j e_j+\eta_j,
\qquad
\eta_j=P_j^\perp\phi_j,
\]
where $P_j$ is the orthogonal projection onto $\C e_j$ and
$P_j^\perp=I-P_j$. Set
\[
\delta_j(\rho)=\min_{k\neq j}|\rho^k-\rho^j|.
\]
Then
\[
\min_j\delta_j(\rho)\to+\infty
\qquad\text{as }\rho\to+\infty.
\]

Projecting
\(
(H_\rho-\lambda_j)\phi_j=0
\)
onto $e_j^\perp$ gives
\[
\bigl(P_j^\perp H_\rho P_j^\perp-\lambda_j\bigr)\eta_j
=
\alpha_j P_j^\perp A e_j.
\]
Moreover,
\[
P_j^\perp H_\rho P_j^\perp
=
P_j^\perp\Lambda_\rho P_j^\perp
-
P_j^\perp A P_j^\perp .
\]
Since
\[
\dist\bigl(\lambda_j,\sigma(P_j^\perp\Lambda_\rho P_j^\perp)\bigr)
\geq
\delta_j(\rho)-M
\]
and
\[
\|P_j^\perp A P_j^\perp\|\leq M,
\]
we get
\[
\dist\bigl(\lambda_j,\sigma(P_j^\perp H_\rho P_j^\perp)\bigr)
\geq
\delta_j(\rho)-2M.
\]
Thus, for $\rho$ large,
\(
P_j^\perp H_\rho P_j^\perp-\lambda_j
\)
is invertible on $e_j^\perp$, and
\[
\left\|
\bigl(P_j^\perp H_\rho P_j^\perp-\lambda_j\bigr)^{-1}
\right\|
\leq
\frac{1}{\delta_j(\rho)-2M}.
\]
Therefore
\[
\|\eta_j\|
\leq
\frac{\|Ae_j\|}{\delta_j(\rho)-2M}\,|\alpha_j|
\leq
\frac{\sqrt{n-1}}{\delta_j(\rho)-2M}\,|\alpha_j|.
\]
Put
\[
c_\rho
=
\max_{1\leq j\leq n}
\frac{\sqrt{n-1}}{\delta_j(\rho)-2M}.
\]
Then $c_\rho\to0$, and
\[
\|\eta_j\|\leq c_\rho|\alpha_j|.
\]
Since $\|\phi_j\|=1$,
\[
1=|\alpha_j|^2+\|\eta_j\|^2
\leq
(1+c_\rho^2)|\alpha_j|^2.
\]
Hence
\[
|\alpha_j|^2\geq \frac{1}{1+c_\rho^2},
\qquad
\|\eta_j\|^2\leq \frac{c_\rho^2}{1+c_\rho^2}.
\]

Now consider
\[
B=\bigl(\phi_j(k)^2\bigr)_{1\leq j,k\leq n}.
\]
The transpose of $B$ is the squared-eigenvector matrix from Condition  \textup{(I)}.
For each $j$,
\[
B_{jj}=\phi_j(j)^2=|\alpha_j|^2
\geq
\frac{1}{1+c_\rho^2},
\]
while
\[
\sum_{k\neq j}|B_{jk}|
=
\sum_{k\neq j}\phi_j(k)^2
=
\|\eta_j\|^2
\leq
\frac{c_\rho^2}{1+c_\rho^2}.
\]
For $\rho$ large, $c_\rho<1$, and therefore
\[
B_{jj}>\sum_{k\neq j}|B_{jk}|.
\]
Thus $B$ is strictly diagonally dominant. By the Levy--Desplanques theorem 
\cite[Theorem~6.2.27]{HornJohnson}
$B$ is invertible. Hence, Condition  \textup{(I)} holds.

\smallskip

\noindent\emph{Condition  \textup{(S)}.}
It remains to show that all eigenvectors share a common nonzero coordinate. The
estimate above gives $\phi_j(j)\neq 0$ for $\rho$ large. We now rescale the
eigenvector associated with $\lambda_j=\lambda_j(\rho)$ by
\[
\widetilde{\phi}_j(v_j)=1.
\]
Write
\[
\widetilde{\phi}_j=e_j+\eta_j,
\qquad
\eta_j\in e_j^\perp.
\]
As before,
\[
\bigl(P_j^\perp H_\rho P_j^\perp-\lambda_j\bigr)\eta_j
=
P_j^\perp A e_j.
\]
Set
\[
T_j(\rho)
=
P_j^\perp\Lambda_\rho P_j^\perp-\lambda_j P_j^\perp,
\qquad
B_j=P_j^\perp A P_j^\perp .
\]
Then
\[
P_j^\perp H_\rho P_j^\perp-\lambda_j
=
T_j(\rho)-B_j.
\]
For $\rho$ large,
\[
\|T_j(\rho)^{-1}B_j\|<1,
\]
and hence
\[
(T_j(\rho)-B_j)^{-1}
=
\sum_{\ell=0}^{\infty}
\bigl(T_j(\rho)^{-1}B_j\bigr)^\ell T_j(\rho)^{-1}.
\]
Thus
\[
\eta_j
=
\sum_{\ell=0}^{\infty}
\bigl(T_j(\rho)^{-1}B_j\bigr)^\ell
T_j(\rho)^{-1}P_j^\perp A e_j .
\]

We take the $v_1$-coordinate. If $j=1$, then
\[
\widetilde{\phi}_1(v_1)=1.
\]
Assume $j\geq2$, and set
\[
r=\dist(v_1,v_j).
\]
Taking the $v_1$-coordinate in the Neumann expansion gives the usual walk
expansion. The first nonzero terms come from shortest paths
\[
v_1=w_0\sim w_1\sim\cdots\sim w_{r-1}\sim v_j.
\]
A shortest path of this form contributes
\[
\prod_{m=0}^{r-1}
\frac{1}{\rho^{i(w_m)}-\lambda_j},
\]
where $i(w_m)$ denotes the label of the vertex $w_m$.

Along a shortest path from $v_1$ to $v_j$,
\[
\dist(v_1,w_m)=m<r=\dist(v_1,v_j),
\]
and therefore, by the choice of the labeling,
\[
i(w_m)<j,
\qquad m=0,\ldots,r-1.
\]
Hence
\[
\rho^{i(w_m)}-\lambda_j
=
-\rho^j(1+o(1)).
\]
Thus every shortest path contributes
\[
(-1)^r\rho^{-jr}(1+o(1)).
\]
All shortest-path contributions have the same sign. Since $G$ is connected,
there is at least one shortest path from $v_1$ to $v_j$. If $N_j$ is the number
of such paths, then
\[
\widetilde{\phi}_j(v_1)
=
(-1)^r N_j\rho^{-jr}
+
o(\rho^{-jr}).
\]
Since $N_j\geq1$, the leading coefficient is nonzero. Hence
\[
\widetilde{\phi}_j(v_1)\neq0
\]
for all sufficiently large $\rho$.

There are only finitely many $j$. After increasing $R$ if necessary, we have
\[
\widetilde{\phi}_j(v_1)\neq0
\qquad\text{for every }j=1,\ldots,n.
\]
Thus all eigenvectors have a common nonzero coordinate. Therefore,  the supports of every pair of eigenvectors intersect, and Condition  \textup{(S)} holds.
\end{proof}

\subsection{Generic potentials}

Theorem~\ref{thm:mainth} gives sufficient conditions for uniqueness in the
dynamical phase retrieval problem. It is therefore natural to ask whether
these conditions are stable under perturbations of the potential. In this
section, we prove Theorem~\ref{thm:generic}, which shows that, on every
connected graph, the conditions hold for almost every diagonal potential.

Let $G=(V,E)$ be a connected graph. 
It is enough to show 
that, for almost every $Q\in\R^V$, $H_Q$
satisfies each  of the three properties
$B_2$, \textup{(I)} and \textup{(S)}. This will be done respectively in Propositions \ref{prop:generic-B2}, \ref{prop:generic-I}
and \ref{thm:generic-S}.
It follows that
$H_Q$ satisfies the three Conditions $B_2$--\textup{(I)}--\textup{(S)} simultaneously for almost every $Q\in\R^V$.
From Theorem \ref{thm:mainth}, we then conclude that $H_Q$ does Schr\"odinger phase retrieval, concluding the proof of Theorem \ref{thm:generic}.

The proofs of the three propositions give slightly stronger results as the set
of $Q$'s to be excluded in each of these propositions is a proper algebraic subset of $\R^V$.

\subsubsection{Genericity of Condition $B_2$}\label{sec:b2}
First,  we show that  Condition $B_2$ holds
generically for Schr\"o\-din\-ger operators with random diagonal potentials. We will prove a stronger result that holds for any real symmetric matrix $L$.

For $Q\in\mathbb R^n$, we consider
\begin{equation}
    \label{def:LQ}
L_Q=L+\diag(Q_1,\ldots,Q_n).
\end{equation}
When $Q$ is chosen according to a probability measure absolutely continuous
with respect to Lebesgue measure,  we prove that, almost surely, the spectrum of
$L_Q$ is a $B_2$-set. This applies in particular when $L=\Delta_G$ so that
$L_Q=H_Q$.

We begin with a standard symmetric-tensor reformulation.
\begin{definition}\label{defn-sym-tensor}
Let $\R^n\otimes\R^n$ be the tensor product, and let
\[
\tau:\R^n\otimes\R^n\to\R^n\otimes\R^n\quad,\quad
\tau(x\otimes y)=y\otimes x
\]
be the flip map. We define
\[
\operatorname{Sym}^2(\R^n)
=
\{w\in\R^n\otimes\R^n:\tau w=w\}.
\]
Equivalently, if $e_1,\ldots,e_n$ is the standard basis of $\R^n$, then
$\operatorname{Sym}^2(\R^n)$ is spanned by
\[
e_i\otimes e_i,
\qquad
1\leq i\leq n,
\quad\mbox{and}
\quad
e_i\otimes e_j+e_j\otimes e_i,
\qquad
1\leq i<j\leq n.
\]
\end{definition}

If $L$ is a real symmetric matrix, then
\[
L\otimes I+I\otimes L
\]
preserves $\operatorname{Sym}^2(\R^n)$. If
\[
L\phi_k=\lambda_k\phi_k,
\qquad k=1,\ldots,n,
\]
then the eigenvalues of the restriction of $L\otimes I+I\otimes L$ to
$\operatorname{Sym}^2(\R^n)$ are
\[
\lambda_i+\lambda_j,
\qquad
1\leq i\leq j\leq n.
\]
Therefore, the eigenvalues of $L$ form a $B_2$-set if and only if this
restricted operator has simple spectrum.
This construction will also be used in Section \ref{sec:obs3}.

\begin{proposition}[Genericity of Condition \(B_2\)]\label{prop:generic-B2}
Let $L$ be a fixed real symmetric $n\times n$ matrix, $Q\in\R^n$ and define $L_Q$
as in \eqref{def:LQ}.
Assume that $Q$ is distributed according to a probability measure
absolutely continuous with respect to Lebesgue measure on $\R^n$.
Then, almost surely, the spectrum of $L_Q$ is a $B_2$-set.
\end{proposition}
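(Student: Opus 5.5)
The approach is to show that the set of bad potentials is contained in the zero set of a nonzero polynomial in $Q$. Such a set is a proper algebraic subset of $\R^n$, so it has Lebesgue measure zero and hence probability zero under any absolutely continuous law. Following the symmetric-tensor reformulation just given, set
\[
S_Q=(L_Q\otimes I+I\otimes L_Q)\big|_{\operatorname{Sym}^2(\R^n)},
\]
a real symmetric operator on a space of dimension $N=n(n+1)/2$. Its entries depend affinely on $Q$. The spectrum of $L_Q$ is a $B_2$-set exactly when $S_Q$ has simple spectrum. Let $p_Q$ be the characteristic polynomial of $S_Q$, and let $D(Q)=\operatorname{disc}(p_Q)$. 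The discriminant is a polynomial in the coefficients of $p_Q$, which are themselves polynomials in $Q$, so $D$ is a real polynomial in $(Q_1,\ldots,Q_n)$. Moreover $D(Q)\neq0$ if and only if $S_Q$ has simple spectrum. Since the zero set of a nonzero polynomial has Lebesgue measure zero, the whole proof reduces to exhibiting one $Q$ with $D(Q)\neq 0$.

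To find such a $Q$, I would reuse the idea of Proposition \ref{prop:explicit-good-potential-connected}. Take $Q=\diag(\rho,\rho^2,\ldots,\rho^n)$ with $\rho$ large, and set $M=\|L\|$. By Weyl's inequality the ordered eigenvalues of $L_Q$ satisfy $|\lambda_j-\rho^j|\le M$. Suppose $\lambda_j+\lambda_k=\lambda_{j'}+\lambda_{k'}$. Then
\[
\bigl|(\rho^j+\rho^k)-(\rho^{j'}+\rho^{k'})\bigr|\le 4M .
\]
The separation estimate already proved in Proposition \ref{prop:explicit-good-potential-connected} says that distinct sums $\rho^a+\rho^b$ differ by at least $\rho(\rho-2)$. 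That argument uses nothing about $A$ beyond its norm, so it carries over to a general symmetric $L$. Choosing $\rho$ with $\rho(\rho-2)>4M$ forces $\{j,k\}=\{j',k'\}$. Hence the spectrum of $L_Q$ is a $B_2$-set at this $Q$, $S_Q$ has simple spectrum there, and $D\not\equiv0$. The same computation could be phrased as a perturbation of the diagonal matrix $\diag(Q)$, whose $B_2$ property is obvious for $Q=(\rho^j)_j$.

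The main point requiring care is the claim that $D$ is a polynomial in $Q$ whose vanishing is equivalent to a repeated eigenvalue of $S_Q$. This holds because the discriminant vanishes exactly when $p_Q$ has a repeated root. For a symmetric matrix, a repeated root of the characteristic polynomial is the same as a repeated eigenvalue, since symmetric matrices are diagonalizable.

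I would also check that the restriction to $\operatorname{Sym}^2$ has polynomial dependence. In the fixed basis $e_i\otimes e_i$, $e_i\otimes e_j+e_j\otimes e_i$, the matrix of $S_Q$ has entries affine in $Q$, which settles it. This also gives the stronger statement mentioned earlier, that the excluded set is a proper algebraic subset of $\R^n$.
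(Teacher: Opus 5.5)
Your proposal is correct and follows essentially the same route as the paper: you reduce to the non-vanishing of the discriminant of the characteristic polynomial of $L_Q\otimes I+I\otimes L_Q$ on $\operatorname{Sym}^2(\R^n)$, and you certify it at one large diagonal potential whose entries form a $B_2$-set. The only difference is cosmetic: the paper takes $Q=t\alpha$ with $\alpha_r=3^r$ and argues asymptotically via $\lambda_r(t\alpha)=t\alpha_r+O(1)$, whereas you take $Q=(\rho^j)_j$ and make the threshold explicit through Weyl's inequality and the separation bound $\rho(\rho-2)>4\|L\|$ borrowed from Proposition~\ref{prop:explicit-good-potential-connected}.
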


\begin{proof}
Let $\lambda_1(Q),\ldots,\lambda_n(Q)$ be the eigenvalues of $L_Q$, counted
with multiplicity, and put
\[
        K_Q=L_Q\otimes I+I\otimes L_Q
        \quad\text{on } \operatorname{Sym}^2(\mathbb R^n).
\]
Then $L_Q$ has $B_2$ spectrum if and only if $K_Q$ has
simple spectrum.

Fix a basis of $\operatorname{Sym}^2(\mathbb R^n)$ and set
\[
        P_Q(X)=\det(XI-K_Q).
\]
The coefficients of $P_Q$ are polynomial in $Q$, and the exceptional set is
$\{Q\,:\operatorname{Disc}(P_Q)=0\}$.
It remains to check that this discriminant is not identically zero.

Choose a finite $B_2$-set $\alpha=\{\alpha_1,\ldots,\alpha_n\}$, for instance
$\alpha_r=3^r$. Then
\[
        t^{-1}L_{t\alpha}
        =
        \operatorname{diag}(\alpha_1,\ldots,\alpha_n)+t^{-1}L,
\]
so, after relabeling,
\[
        \lambda_r(t\alpha)=t\alpha_r+O(1).
\]
Hence
\[
        \lambda_i(t\alpha)+\lambda_j(t\alpha)
        =
        t(\alpha_i+\alpha_j)+O(1).
\]
Since the sums $\alpha_i+\alpha_j$, $1\le i\le j\le n$, are pairwise
distinct, the same holds for
$\lambda_i(t\alpha)+\lambda_j(t\alpha)$ when $t$ is large. Thus $K_{t\alpha}$
has simple spectrum for some $t$, and therefore
\[
        \operatorname{Disc}(P_Q)\not\equiv 0.
\]
Its zero set is a proper algebraic subset of $\mathbb R^n$, hence has
Lebesgue measure zero. Absolute continuity of the law of $Q$ then gives
\[
        \mathbb P\{\operatorname{Disc}(P_Q)=0\}=0.
\]
Equivalently, almost surely, $K_Q$ has simple spectrum, i.e., the spectrum of
$L_Q$ is a $B_2$-set.
\end{proof}

\subsubsection{Genericity of Condition \textup{(I)}}\label{sec:genI}

Let us first note that  if G is connected, then 
Condition \textup{(I)} is never satisfied by the unperturbed Laplacian $\Delta_G$, so  here the
perturbation with a potential is needed. 
 Indeed, since the zero eigenvalue is simple, every orthonormal eigenbasis contains the normalized constant eigenvector 
$\phi_1=n^{-1/2}(1,1,\ldots,1)$.

Consider then $\phi_2,\ldots,\phi_n$
so that $(\phi_k)_{k=1,\ldots,n}$ is an orthonormal basis of (real) eigenvectors of $\Delta_G$.
Writing the standard basis $(e_j)_{j=1,\ldots,n}$ in this basis and using Parseval, we get
$$
1=\|e_j\|^2=\sum_{k=1}^n\scal{e_j,\phi_k}^2=\sum_{k=1}^n\phi_k(j)^2.
$$
Isolating $\phi_1(j)^2=n^{-1}$ gives
$$
\sum_{k=2}^n\phi_k(j)^2=1-\frac{1}{n}=\frac{n-1}{n}=(n-1)\phi_1(j)^2.
$$
This shows that the columns of the matrix $[\phi_k(j)^2]$ are linearly dependent so that this matrix is not invertible.
In other words,  $\Delta_G$ does {\em not} satisfy Condition \textup{(I)}. 

\smallskip

Let us now prove that Condition \textup{(I)} holds under generic perturbations of $\Delta_G$.

 \begin{proposition}
\label{prop:generic-I}
Let $G=(V,E)$ be a finite graph with $|V|=n$. Then, for Lebesgue-a.e.
$Q\in\R^V$, the operator $H_Q$ satisfies Condition \textup{(I)}.
\end{proposition}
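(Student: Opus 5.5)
The plan is to prove that the exceptional set is contained in the zero set of a polynomial in $Q$ that is not identically zero, exactly as in the proof of Proposition~\ref{prop:generic-B2}. We cannot work with the eigenvectors directly, since they do not depend polynomially on $Q$. Instead we use the reformulation of Section~\ref{sec:reform}: when $H_Q$ has simple spectrum, Condition \textup{(I)} is equivalent to the invertibility of
\[
B_Q=\bigl[(H_Q^{m-1})_{j,j}\bigr]_{1\leq j,m\leq n}.
\]
The entries of $B_Q$ are diagonal entries of powers of $H_Q=\Delta_G+\diag(Q)$. They are therefore polynomials in $(Q_1,\ldots,Q_n)$, and so $p(Q):=\det B_Q$ is a polynomial on $\R^V$.

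The steps are as follows.

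First, by Proposition~\ref{prop:generic-B2}, the set of $Q$ for which $H_Q$ fails to have simple spectrum lies in a proper algebraic set $Z_1$, namely the zero set of the discriminant of the characteristic polynomial. This set is Lebesgue-null.

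Second, for $Q\notin Z_1\cup\{p=0\}$, the factorization $B_Q=MV$ with $V$ an invertible Vandermonde matrix shows that $M=[\phi_k(j)^2]$ is invertible, so Condition \textup{(I)} holds. In fact the factorization gives more: if $\det M\neq0$ at some point with simple spectrum, then $\det B_Q\neq 0$ there.

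Third, it therefore suffices to exhibit a single $Q$ with $p(Q)\neq0$. Once that is done, $\{p=0\}$ is a proper algebraic subset, hence Lebesgue-null, and the union of the two null sets is the exceptional set.

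The only real work is this last step, finding one good $Q$. Here I would reuse Proposition~\ref{prop:explicit-good-potential-connected}. For $Q_\rho=\Lambda_\rho-D_G$ with $\rho$ large, $H_{Q_\rho}$ has simple spectrum and its squared-eigenvector matrix is strictly diagonally dominant, hence invertible. Consequently $\det B_{Q_\rho}=\det M\cdot\det V\neq 0$.

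The statement of Proposition~\ref{prop:generic-I} does not assume that $G$ is connected, whereas Proposition~\ref{prop:explicit-good-potential-connected} was stated for connected graphs. The part of its proof giving $B_2$ and \textup{(I)}, however, used only Weyl's inequality and the resolvent bound for $\Lambda_\rho-A$; connectedness entered only in the proof of \textup{(S)}. So the same argument applies verbatim to any graph.

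Alternatively, and even more simply, take $Q=t\alpha$ with $\alpha_r=3^r$ and $t\to\infty$. Then $t^{-1}H_{t\alpha}\to\diag(\alpha)$, so the normalized eigenvectors converge, up to sign, to the standard basis vectors. Hence $M\to I$ up to a permutation, and $\det M\neq0$ for large $t$.

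I expect the only delicate point to be this justification that the eigenvectors converge, or equivalently the diagonal-dominance estimate. It is already available from the proof of Proposition~\ref{prop:explicit-good-potential-connected}, so the obstacle is mild.
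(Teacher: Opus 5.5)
Your proposal is correct and follows the paper's proof. The paper also reduces to the polynomial $F(Q)=\det B_Q$ via $B_Q=M_QV_Q$, discards the null set where the spectrum is not simple, and exhibits one potential with $F\neq0$. The only difference is the witness: the paper takes $Q=tc$ with distinct $c_j$ and reads off $F(tc)=t^{n(n-1)/2}\bigl(\prod_{i<j}(c_j-c_i)+O(t^{-1})\bigr)$ directly from $(H_{tc}^{m-1})_{j,j}=t^{m-1}c_j^{m-1}+O(t^{m-2})$, which needs no eigenvector perturbation theory, whereas you certify $\det M\neq0$ through the diagonal-dominance estimate of Proposition~\ref{prop:explicit-good-potential-connected} (rightly noting that connectedness is not used there) and transfer it to $\det B_Q$ via the factorization.
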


\begin{proof}
By Proposition~\ref{prop:generic-B2}, for almost every $Q\in\R^V$, the
eigenvalues $(\lambda_k)_{k=1}^n$ of $H_Q$ form a $B_2$-set and are
therefore simple. For such a $Q$, let $(\phi_k)_{k=1}^n$ be a corresponding
real orthonormal basis of eigenvectors, and set
\[
M_Q=\bigl[\phi_k(j)^2\bigr]_{j,k=1}^n.
\]
We want to prove that $M_Q$ is invertible.

We use the reformulation from Section~\ref{sec:reform}. Consider the
Vandermonde matrix $V_Q=\bigl[\lambda_k^{m-1}\bigr]_{k,m=1}^n$ and the matrix
$B_Q=\bigl[(H_Q^{m-1})_{j,j}\bigr]_{j,m=1}^n$. We have shown that $B_Q=M_QV_Q$.

Since the eigenvalues are simple, $V_Q$ is invertible. Hence, for such $Q$,
$M_Q$ is invertible if and only if $B_Q$ is invertible.

Define
\[
F(Q)=\det B_Q.
\]
Since every entry of $B_Q$ is a polynomial in the coordinates of $Q$, the
function $F$ is a polynomial on $\R^V$. We show that $F\not\equiv 0$.

Fix a labeling $V=\{1,\ldots,n\}$. Choose distinct real numbers
$c_1,\ldots,c_n$, and set
\[
c=(c_1,\ldots,c_n)\in\R^V,
\qquad
C=\operatorname{diag}(c_1,\ldots,c_n).
\]
We first consider the particular case $Q=tc$ when $t\to+\infty$, thus
\[
H_{tc}=\Delta_G+tC.
\]
For every fixed integer $r\geq 1$, 
\[
H_{tc}^r=t^rC^r+O(t^{r-1}).
\]
Consequently, for $m\geq 2$,
\[
(H_{tc}^{m-1})_{j,j}
=
t^{m-1}c_j^{m-1}+O(t^{m-2}),
\]
while the first column of $B_{tc}$ is exactly $(1,\ldots,1)^T$. Therefore
\[
F(tc)
=
t^{0+1+\cdots+(n-1)}
\left(
\det\bigl[c_j^{m-1}\bigr]_{j,m=1}^n+O(t^{-1})
\right).
\]
Since
\[
\det\bigl[c_j^{m-1}\bigr]_{j,m=1}^n
=
\prod_{1\leq i<j\leq n}(c_j-c_i)\neq 0,
\]
we have $F(tc)\neq 0$ for all sufficiently large $t$. Hence
$F\not\equiv 0$.

It follows that
\[
Z=\{Q\in\R^V:F(Q)=0\}
=
\{Q\in\R^V:B_Q\text{ is singular}\}
\]
is a proper algebraic subset of $\R^V$, and therefore has Lebesgue measure
zero. The fact used here is that the zero set of a nonzero real polynomial
has Lebesgue measure zero.  

Outside the union of $Z$ and the measure zero set where the eigenvalues of
$H_Q$ fail to form a $B_2$-set, both $B_Q$ and $V_Q$ are invertible.
Since
\[
B_Q=M_QV_Q,
\]
the matrix $M_Q$ is invertible. Hence $H_Q$ satisfies Condition
\textup{(I)} for Lebesgue-a.e. $Q\in\R^V$.
\end{proof}

\begin{remark}
Connectedness was not used in the proofs of Proposition \ref{prop:generic-B2} and \ref{prop:generic-I}.  Thus Condition \textup{(I)} is generic for
arbitrary finite graphs under diagonal perturbations; connectedness enters only
in the generic full-support Condition \textup{(S)}.
\end{remark}

\subsubsection{Genericity of Condition \textup{(S)}}

\begin{proposition}[Genericity of Condition $\textup{(S)}$]
\label{thm:generic-S}
Let \(G=(V,E)\) be a finite connected graph.
Then, for Lebesgue-a.e. \(Q\in\R^V\), every eigenvector of \(H_Q\) has full
support. In particular, \(H_Q\) satisfies Condition \textup{(S)} for a.e. \(Q\).
\end{proposition}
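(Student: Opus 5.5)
Restrict to the full-measure set where the spectrum of $H_Q$ is simple (Proposition~\ref{prop:generic-B2}). For such $Q$ and each eigenvalue $\lambda$, the eigenspace is one-dimensional and spanned by any nonzero column of $\operatorname{adj}(\lambda I-H_Q)$. Hence an eigenvector vanishes at a vertex $j$ if and only if the $j$-th row of the adjugate vanishes. The cleaner way to exploit this is through the diagonal entry.

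For a unit eigenvector $\phi$,
\[
\operatorname{adj}(\lambda I-H_Q)_{jj}=p_Q'(\lambda)\,\phi(j)^2,
\]
where $p_Q(X)=\det(XI-H_Q)$ and $p_Q'(\lambda)\neq0$ by simplicity. Moreover $\operatorname{adj}(XI-H_Q)_{jj}=\det(XI-H_Q^{(j)})=:p_Q^{(j)}(X)$, the characteristic polynomial of the principal submatrix obtained by deleting row and column $j$. So an eigenvector of $H_Q$ vanishes at $j$ if and only if $p_Q$ and $p_Q^{(j)}$ share a root. This happens exactly when the resultant
\[
R_j(Q)=\operatorname{Res}_X\bigl(p_Q,p_Q^{(j)}\bigr)
\]
vanishes. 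The resultant $R_j$ is a polynomial in $Q$.

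The exceptional set is therefore contained in $\{\operatorname{Disc}\,p_Q=0\}\cup\bigcup_j\{R_j=0\}$. It suffices to show each $R_j\not\equiv0$, because the zero set of a nonzero polynomial is Lebesgue-null.

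This nonvanishing is the main step. It should follow from Proposition~\ref{prop:explicit-good-potential-connected} applied with base vertex $a=j$. That proof shows that for $\rho$ large, every eigenvector $\widetilde\phi_i$ of $H_{Q_\rho}$ has $\widetilde\phi_i(v_1)\neq0$, via the shortest-path walk expansion, which uses connectedness. It also shows the spectrum is simple. Hence $R_j(Q_\rho)\neq0$, so $R_j\not\equiv0$.

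One subtlety is that the construction uses a labeling adapted to the chosen vertex. Since we are free to pick the base vertex $a=j$ and relabel, this causes no problem: $Q_\rho$ is just a particular point of $\R^V$.

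Running this for each of the $n$ vertices gives finitely many proper algebraic sets. Outside their union, together with the discriminant set, every eigenvector has full support. Full support of every eigenvector implies Condition \textup{(S)} trivially.

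I expect the only delicate point to be justifying the adjugate identity and its consequence that a common root of $p_Q$ and $p_Q^{(j)}$ forces a zero coordinate. This follows from the spectral decomposition
\[
\operatorname{adj}(XI-H_Q)=\sum_k \phi_k\phi_k^T\prod_{i\neq k}(X-\lambda_i),
\]
evaluated at $X=\lambda_k$.
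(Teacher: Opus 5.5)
Your argument is correct. It follows the paper's overall plan: for each vertex, a polynomial in $Q$ whose nonvanishing rules out an eigenvector vanishing there, shown not identically zero by a large distance-ordered diagonal potential. The difference is the key lemma and the source of the witness.

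\textbf{The paper's route.} It certifies full support at $a$ through the Krylov determinant $P_a(Q)=\det[e_a,H_Qe_a,\ldots,H_Q^{n-1}e_a]$. If $P_a(Q)\neq0$, then $e_a$ is cyclic, and an eigenvector with $u(a)=0$ would be orthogonal to the basis $(H_Q^ke_a)_{k<n}$. So no prior restriction to simple spectrum and no adjugate computation are needed. Nonvanishing of $P_a$ comes from a fresh shortest-path expansion for $H_{tc}$, with $c$ increasing in the distance to $a$.

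\textbf{Your route.} You use the eigenvector--eigenvalue identity $p_Q^{(j)}(\lambda_k)=p_Q'(\lambda_k)\phi_k(j)^2$ together with a resultant. You import the witness from Proposition~\ref{prop:explicit-good-potential-connected}. That import is legitimate, but it relies on the content of the proof, namely that every eigenvector is nonzero at the base vertex $v_1=a$ for arbitrary $a$. The statement alone only gives Condition \textup{(S)}, which does not force nonvanishing at a prescribed vertex.

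\textbf{The two certificates agree.} With $\Phi=[\phi_1,\ldots,\phi_n]$ orthogonal,
$[e_j,H_Qe_j,\ldots,H_Q^{n-1}e_j]=\Phi\,\operatorname{diag}\bigl(\phi_k(j)\bigr)\,\bigl[\lambda_k^{m-1}\bigr]_{k,m}$.
Hence $P_j(Q)^2=\prod_k\phi_k(j)^2\prod_{i<k}(\lambda_k-\lambda_i)^2$. Meanwhile $R_j(Q)=\prod_k p_Q^{(j)}(\lambda_k)=(-1)^{n(n-1)/2}\prod_k\phi_k(j)^2\prod_{i<k}(\lambda_k-\lambda_i)^2$ on the simple-spectrum set. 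Both sides are polynomials in $Q$ agreeing on a dense open set, so $R_j=(-1)^{n(n-1)/2}P_j^2$ identically. Your exceptional set is therefore exactly the paper's $\bigcup_a Z_a$. In particular, your separate discriminant exclusion is redundant, since $R_j(Q)\neq0$ already forces simple spectrum.

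\textbf{What each buys.} Your route avoids a second path expansion and yields the explicit formula $\phi_k(j)^2=p_Q^{(j)}(\lambda_k)/p_Q'(\lambda_k)$. The paper's route gives a shorter, self-contained passage from the polynomial condition to full support.
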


The proof uses a controllability argument, inspired by
\cite{Godsil2012ControllableSubsets,MonfaredShader2016NowhereZero,
LeveneOblakSmigoc2024GenericEigenvectors,PoignardPereiraPade2018LaplacianSpectra}.

\begin{proof}
Fix \(a\in V\), and set
\[
        \mathcal K_a(Q)
        =
        \big[e_a,H_Qe_a,\ldots,H_Q^{n-1}e_a\big],
        \qquad
        P_a(Q)=\det \mathcal K_a(Q).
\]
Then \(P_a\) is a polynomial in \(Q\). We first show \(P_a\not\equiv 0\).

Choose distinct numbers \(c_v\),  one for each  \(v\in V\), such that 
\[
        \dist(a,u)<\dist(a,v)\quad\Longrightarrow\quad c_u<c_v.
\]
For \(t\) large, the eigenvalues of $H_{tc}$ are simple, and the eigenvectors are
perturbations of the standard basis. Let \(u_b(t)\) be the eigenvector
corresponding to the eigenvalue asymptotic to \(tc_b\), normalized by
\(u_b(t,b)=1\). Writing \(r=\dist(a,b)\), the usual path expansion gives
\[
        u_b(t,a)
        =
        t^{-r}
        \sum_{\gamma:b\to a}
        \prod_{m=1}^{r}\frac{1}{c_{v_m}-c_b}
        +
        O(t^{-r-1}),
\]
where the sum is over shortest paths
\(\gamma=(b=v_0,v_1,\ldots,v_r=a)\). For \(b=a\), the sum is interpreted as
\(1\).

Along every shortest path from \(b\) to \(a\), the distance to \(a\) strictly
decreases, hence
\[
        c_{v_m}<c_b,\qquad m=1,\ldots,r.
\]
Thus all summands in the leading coefficient have the same sign, and the
coefficient is nonzero. Hence \(u_b(t,a)\neq 0\) for all large \(t\), for every
\(b\in V\).

It follows that, for such \(t\), \(e_a\) has nonzero projection onto every
eigenspace of \(H_{tC}\). Since the spectrum is simple, \(e_a\) is cyclic, i.e.
\[
        \det\big[e_a,H_{tC} e_a,\ldots,H_{tC}^{n-1}e_a\big]\neq 0.
\]
This gives \(P_a(tc)\neq0\) for some \(t\), and therefore
\(P_a\not\equiv0\). Hence
\[
        Z_a:=\{Q\in\R^V:P_a(Q)=0\}
\]
is a proper algebraic set, thus has measure zero.

Let
\[
        Z=\bigcup_{a\in V} Z_a .
\]
Then \(Z\) has measure zero. If \(Q\notin Z\), then for every \(a\in V\),
\[
        e_a,H_Qe_a,\ldots,H_Q^{n-1}e_a
\]
is a basis of \(\R^V\).

Now let \(H_Qu=\lambda u\). If \(u(a)=0\), then for \(k=0,\ldots,n-1\),
\[
        \langle u,H_Q^k e_a\rangle
        =
        \langle H_Q^k u,e_a\rangle
        =
        \lambda^k \langle u,e_a\rangle
        =0.
\]
Thus \(u\) is orthogonal to a basis, hence \(u=0\), a contradiction. Therefore
\(u(a)\neq0\). Since \(a\) was arbitrary, every eigenvector has full support for
all \(Q\notin Z\).
\end{proof}

\section{Obstructions}\label{sec:obstructions}

\subsection{Disconnected graphs, multiple eigenvalues}

Let $G=(V,E)$ be a simple graph.
Suppose that $G$ has two connected components $V_1=\{1,\ldots,m\}$ and $V_2=\{m+1,\ldots,n\}$ and denote by $G_1,G_2$ the corresponding induced graphs. Let $u_1\in\C^{V_1},u_2\in\C^{V_2}$, $u_1,u_2\not=0$, and $u_0=(u_1,u_2)\in\C^V$. 
Then the restriction of $e^{-it\Delta_G}u_0$ to $V_j$ is $e^{-it\Delta_{G_j}}u_j$.
In particular, if $c\in\T$, $c\not=1$, and $v_0=(u_1,cu_2)$ then
$u_0,v_0$ are $e^{-it\Delta_G}$-partners but nontrivial partners.
The construction extends to an arbitrary number of connected components
and also applies when $Q\not=0$.

\smallskip

Next assume that $H_Q$ has an eigenvalue $\lambda$ of multiplicity $\geq 2$ and let $E_\lambda$
be the corresponding eigenspace. If $u_0\in E_\lambda$ then $e^{-itH_Q}u_0=e^{-it\lambda}u_0$.
In particular, for every $j\in V$, 
$$
|e^{-itH_Q}u_0(j)|=|e^{-it\lambda}u_0(j)|=|u_0(j)|=|\overline{u_0(j)}|=
|e^{-itH_Q}\overline{u_0(j)}|
$$
so that $\overline{u_0}$ is an $\partner$-partner of $u_0$. However, every complex space of dimension at 
least $2$ contains vectors $u_0$ such that $\overline{u_0}$ is not a multiple of $u_0$. 

\smallskip

It is easy to build graphs and potentials for which $H_Q$ has repeated eigenvalues. To do so,
recall that the Cartesian product \(G\square G'\) has vertex set
\(V\times V'\), and
\[
        (j,j')\sim(k,k')
\]
if either \(j\sim k\) in \(G\) and \(j'=k'\), or \(j=k\) and
\(j'\sim k'\) in \(G'\). Its Laplacian is
\[
        \Delta_{G\square G'}
        =
        \Delta_G\otimes I_{G'}
        +
        I_G\otimes \Delta_{G'} .
\]
Here we use the notation introduced in Section \ref{sec:b2}.
Next, if \(Q:V\to\mathbb R\) and \(Q':V'\to\mathbb R\), and $R\,:V\times V'\to\R$ is defined by
$R(j,j')=Q(j)+Q'(j')$,
then $H_R=\Delta_{G\square G'}+R$ is given by
\[
        H_R
        =
        H_Q\otimes I_{G'}
        +
        I_G\otimes H_{Q'} .
\]
Therefore,
\[
        e^{-itH_R}(u\otimes u')
        =
        \bigl(e^{-itH_Q}u\bigr)\otimes
        \bigl(e^{-itH_{Q'}}u'\bigr).
\]
As a direct consequence, if \(H_R\) does Schr\"odinger phase retrieval 
then both \(H_Q\) and \(H_{Q'}\) do Schr\"odinger phase retrieval.  
%
The converse fails. Indeed, take $G=G'$ and $Q=Q'$. Then if $(\phi_j)_{j=1}^n$ are the eigenvectors
of $H_Q$ with corresponding eigenvalues $(\lambda_j)_{j=1}^n$ then
$H_R\phi_j\otimes\phi_k=(\lambda_j+\lambda_k)\phi_j\otimes\phi_k$.
Therefore the eigenvalues of $H_R$ are given by
$\{\lambda_j+\lambda_k:1\le j,k\le n\}$. 
In particular, if \(j\neq k\), then
\[
        \lambda_j+\lambda_k=\lambda_k+\lambda_j,
\]
and the eigenvalues now have multiplicity. It follows that $H_R$ fails to do phase retrieval.

\subsection{Failure of Condition  \textup{(S)}}

\begin{lemma}
    Let $G=(V,E)$ be a graph, and let $Q\in\mathbb R^V$ be such that $H_Q$  
    does not satisfy Condition  \textup{(S)}. 
Then $H_Q$ fails to do phase retrieval.
\end{lemma}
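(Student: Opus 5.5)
The plan is a direct counterexample that uses none of the other conditions. If $H_Q$ fails Condition \textup{(S)}, then for the chosen real orthonormal eigenbasis $(\phi_k)_{k=1}^n$ there are indices $k\neq\ell$ whose eigenvectors have disjoint supports:
\[
\phi_k(j)\phi_\ell(j)=0\qquad\text{for every } j\in V.
\]
I would build a pair of nontrivial $\partner$-partners out of $\phi_k$ and $\phi_\ell$ alone. Neither $B_2$ nor \textup{(I)} is needed, and it does not matter whether the eigenvalues $\lambda_k,\lambda_\ell$ are simple, since $\phi_k,\phi_\ell$ are genuine eigenvectors in any case.

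Concretely, fix $c\in\T$ with $c\neq1$ and set
\[
u_0=\phi_k+\phi_\ell,\qquad v_0=\phi_k+c\,\phi_\ell .
\]
Then
\[
e^{-itH_Q}u_0(j)=e^{-i\lambda_kt}\phi_k(j)+e^{-i\lambda_\ell t}\phi_\ell(j),
\]
and similarly for $v_0$ with the second term multiplied by $c$. Fix a vertex $j$. By the disjoint-support property, at most one of $\phi_k(j),\phi_\ell(j)$ is nonzero, so both moduli equal $|\phi_k(j)|+|\phi_\ell(j)|$, for every $t$ and every $T>0$. Hence $u_0,v_0$ are $\partner$-partners.

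It remains to check that $u_0,v_0$ are not trivial partners. Suppose $v_0=c'u_0$ with $c'\in\T$. Comparing coefficients in the orthonormal basis (coefficient of $\phi_k$, then of $\phi_\ell$) gives $c'=1$ and $c'=c$, contradicting $c\neq1$. So $H_Q$ does not do Schr\"odinger phase retrieval.

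There is no real obstacle here; the argument is the $|A|=2$ case of Proposition~\ref{prop:classification}, made explicit without assuming $B_2$ or \textup{(I)}. The only point needing care is that the interference terms vanish pointwise because of the supports, not because of any frequency separation. That is exactly why the conclusion holds without the hypotheses of Section~\ref{sec:spectral-criterion}.
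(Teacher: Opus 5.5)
Your proposal is correct and is essentially the paper's own proof. The paper also takes two eigenvectors with disjoint supports, compares $u_0=\phi_1+\phi_2$ with $u_c=c_1\phi_1+c_2\phi_2$ for $c_1\neq c_2$, notes that the pointwise moduli of the evolutions do not depend on $c$ because at each vertex at most one term survives, and concludes that the partners are nontrivial by comparing coefficients, exactly as you do.
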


\begin{proof}
Let $(\phi_k)_{k=1}^n$, $(\lambda_k)_{k=1}^n$ be the eigenvectors and eigenvalues of $H_Q$.
As $H_Q$ does not satisfy Condition \textup{(S)}, at least two of the eigenvectors
have disjoint support. Up to reordering, we may assume that 
$\operatorname{supp}\phi_1\cap\operatorname{supp}\phi_2=\emptyset$.

Now, let $u_0=\phi_1+\phi_2$ and, for $c=(c_1,c_2)\in \T^2$, write $u_c=c_1\phi_1+c_2\phi_2$. Then
$$
e^{-itH_Q}u_c(j)=\begin{cases}
    c_1e^{-i\lambda_1 t}\phi_1(j)&\mbox{if }j\in\operatorname{supp}\phi_1\\
    c_2e^{-i\lambda_2 t}\phi_2(j)&\mbox{if }j\in\operatorname{supp}\phi_2\\
    0&\mbox{otherwise}
\end{cases}
$$
so that $|e^{-itH_Q}u_c(j)|$ is independent of $c$. It follows that if $c_1\not=c_2$
then $u_c$ is an $\partner$-partner of $u_0$ that is not a trivial partner.
\end{proof}

\begin{example}[A graph that satisfies $B_2$,  \textup{(I)},  but not  \textup{(S)}]
Let us now construct a connected graph $G$ and a potential $Q$
that satisfies Conditions $B_2$,  \textup{(I)},  but not  \textup{(S)} so that $H_Q$ fails to do phase retrieval.

Consider the connected graph \(G\) on the vertex set $V=\{1,\ldots,6\}$
with edge set

\begin{center}
\begin{tikzpicture}[scale=1.2]
  \node[circle, draw, minimum size=7mm] (1) at (0,1) {$1$};
  \node[circle, draw, minimum size=7mm] (2) at (0,0) {$2$};
  \node[circle, draw, minimum size=7mm] (3) at (0,-1.2) {$3$};
  \node[circle, draw, minimum size=7mm] (4) at (2,1) {$4$};
  \node[circle, draw, minimum size=7mm] (5) at (2,0) {$5$};
  \node[circle, draw, minimum size=7mm] (6) at (2,-1.2) {$6$};

\node at (6,0){$E=
\bigl\{
\{1,4\},\{1,5\},
\{2,4\},\{2,5\},
\{2,3\},\{5,6\}
\bigr\}.
$};

  \draw (1) -- (4);
  \draw (1) -- (5);
  \draw (2) -- (4);
  \draw (2) -- (5);
  \draw (2) -- (3);
  \draw (5) -- (6);
\end{tikzpicture}
\end{center}

Thus the vertices \(\{1,2\}\) and \(\{4,5\}\) span a copy of
\(K_{2,2}\), while vertex \(3\) is attached to \(2\) and vertex \(6\)
is attached to \(5\).
The degree matrix is thus $D=\operatorname{diag}(2,3,1,2,3,1)$.  

We choose the potential
$Q=\operatorname{diag}\left(1,3,\dfrac{7}{3},-3,-1,-\dfrac{5}{3}\right)$. 
Then
\begin{equation}
    \label{eq:66examplenoS}
H_Q=\Delta_G+Q
=
\begin{pmatrix}
3&0&0&-1&-1&0\\
0&6&-1&-1&-1&0\\
0&-1&\frac{10}{3}&0&0&0\\
-1&-1&0&-1&0&0\\
-1&-1&0&0&2&-1\\
0&0&0&0&-1&-\frac23
\end{pmatrix}.
\end{equation}

The eigenvalues of $H_Q$, in increasing order, are approximately
\[
\lambda_1\approx-1.43804889,
\qquad
\lambda_2=-1,
\qquad
\lambda_3\approx1.68235603,
\]
\[
\lambda_4=3,
\qquad
\lambda_5\approx3.73733104,
\qquad
\lambda_6\approx6.68502848.
\]
A direct computation shows that the \(21\) sums
$\lambda_j+\lambda_k$, $1\leq j\leq k\leq6$ are pairwise distinct, hence \(H_Q\) satisfies Condition \(B_2\).

The eigenvectors corresponding to $\lambda_2$ and $\lambda_4$ are given by
$$
\phi_2=(0,0,0,1,-1,-3)\quad\mbox{and}\quad\phi_4=(1,-1,-3,0,0,0).
$$
As they have disjoint support, Condition \((S)\) fails.

Finally, we verify Condition \((I)\) as reformulated in Section \ref{sec:reform}. Define
\[
B=
\left(
(H_Q^{m-1})_{jj}
\right)_{1\leq j,m\leq6}
=
\begin{pmatrix}
1&3&11&40&155&\frac{1873}{3}\\
1&6&39&\frac{769}{3}&\frac{15319}{9}&\frac{306199}{27}\\
1&\frac{10}{3}&\frac{109}{9}&\frac{1342}{27}
&\frac{19099}{81}&\frac{310069}{243}\\
1&-1&3&4&43&\frac{733}{3}\\
1&2&7&\frac{85}{3}&\frac{1303}{9}&\frac{22243}{27}\\
1&-\frac23&\frac{13}{9}&\frac{10}{27}
&\frac{475}{81}&\frac{4945}{243}
\end{pmatrix},
\]
so that $\det B=\dfrac{140707840}{81}\neq0$. As the eigenvalues are simple, $H_Q$ satisfies Condition  \textup{(I)}.
\end{example}

This example can be extended to obtain graphs of arbitrary size in the following way:
first we add new vertices $\{7,\ldots,N\}$ and for each new vertex $j$, we add two
edges $\{1,j\}$ and $\{2,j\}$. Then we consider a potential $Q_t$ such that
$$
H_{Q_t}=\begin{pmatrix}
    H_6&-C\\
    -C^T&D_t
\end{pmatrix}
$$
where $H_6$ is the $6\times 6$ matrix \eqref{eq:66examplenoS} from the previous example, $C$ is the $6\times (N-6)$
matrix with the first 2 rows containing only $1$'s and the other rows being $0$
and $D_t=\operatorname{diag}(t,3t,\ldots, 3^{N-7}t)$. One can prove  
that, for $t$ large enough, $H_{Q_t}$ still satisfies $B_2$ and  \textup{(I)} and that
$(0,0,0,1,-1,-3,0,\ldots,0)$ and
$(1,-1,-3,0,0,0,0,\ldots,0)$ are eigenvectors.
As their supports are disjoint,
Condition \textup{(S)} is not satisfied, so that $H_{Q_t}$ fails to do phase retrieval.

Such an example requires the graph to have at least 6 vertices:

\begin{proposition}\label{prop:b2andsimpliesS}
Let \(G=(V,E)\) be a connected graph with \(|V|\leq 5\), let
\(Q\in\mathbb{R}^{V}\), and set
\[
H_Q=\Delta_G+Q.
\]
If \(H_Q\) has simple eigenvalues and satisfies Condition \((I)\), then
it also satisfies Condition \((S)\).
\end{proposition}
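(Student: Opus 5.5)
We argue by contradiction. Suppose $H_Q$ has simple spectrum and satisfies Condition \textup{(I)}, but two real orthonormal eigenvectors $\phi_1,\phi_2$ have disjoint supports $S_1=\operatorname{supp}\phi_1$ and $S_2=\operatorname{supp}\phi_2$. The plan is to show that $|S_1|,|S_2|\geq 2$ and that the zero pattern forced by the eigenvalue equations then makes the squared-eigenvector matrix $M=[\phi_k(j)^2]$ singular whenever $n\leq 5$.

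\emph{Step 1: eigenvectors of $H_Q$ cannot have small support.} Let $H_Q\phi=\lambda\phi$ with support $S$. For a vertex $j\notin S$, the equation $(H_Q\phi)(j)=0$ reads
\[
\sum_{k\sim j,\,k\in S}\phi(k)=0 .
\]
So every vertex outside $S$ has either no neighbour in $S$ or at least two. Since $G$ is connected and $S\neq V$, some outside vertex is adjacent to $S$. Suppose $|S|=1$, say $S=\{k\}$. Such a vertex would have exactly one neighbour in $S$, which gives $\phi(k)=0$, a contradiction. Hence $|S_1|,|S_2|\geq 2$. With $n\leq5$, this leaves only the size pairs $(|S_1|,|S_2|)\in\{(2,2),(2,3),(3,2)\}$, with at most one vertex outside $S_1\cup S_2$.

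\emph{Step 2: exploit orthogonality to produce a linear relation among the columns of $M$.} The remaining eigenvectors $\phi_3,\dots,\phi_n$ are orthogonal to both $\phi_1$ and $\phi_2$. Take the case $|S_1|=2$, say $S_1=\{p,q\}$. Orthogonality to $\phi_1$ gives $\phi_k(p)\phi_1(p)=-\phi_k(q)\phi_1(q)$ for all $k\neq1$. Squaring, we get $\phi_k(p)^2=c\,\phi_k(q)^2$ with $c=\phi_1(q)^2/\phi_1(p)^2$, for all $k\geq2$ (for $k=2$ both sides vanish). So the rows $p$ and $q$ of $M$ agree up to the factor $c$ in every column except column $1$. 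Hence
\[
\text{row}_p-c\,\text{row}_q=\bigl(\phi_1(p)^2-c\,\phi_1(q)^2\bigr)e_1 .
\]
Doing the same with $S_2$ when $|S_2|=2$ gives a second row combination equal to a multiple of $e_2$. In the $(2,2)$ case this alone does not give singularity, so we combine it with the row-sum identity $\sum_k\phi_k(j)^2=1$. This follows from Parseval, exactly as in Section~\ref{sec:genI}, together with the column sums $\sum_j\phi_k(j)^2=1$. The aim is to show that the vectors $e_1$, $e_2$, the remaining rows and $\mathbf 1$ span fewer than $n$ dimensions.

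\emph{Step 3: case analysis using the structure of $G$.} In each case we use Step 1's neighbour condition to pin down the edges between $S_i$ and the rest of the graph. For example, every vertex of $S_2$ adjacent to $S_1$ must see both $p$ and $q$, and then $\phi_1(p)=-\phi_1(q)$, which gives $c=1$. With these constraints we check that the $n\times n$ matrix $M$ (with $n\leq5$) has dependent rows. For $|S_1|=2$, the relation $\phi_1(p)=-\phi_1(q)$ forces rows $p$ and $q$ of $M$ to be identical, which is a contradiction immediately. Without it, we use the eigen-equations at $p$ and $q$ to force it.

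\textbf{Main obstacle.} I expect the hard part to be this last step. We must guarantee that whenever $|S_1|=2$ one gets $|\phi_1(p)|=|\phi_1(q)|$, which needs the structure of $G$ around $S_1$, and the $(2,3)$ case needs the three-vertex support handled separately. I would first try to show that $|S_i|=2$ always forces equal moduli via the eigen-equation at an outside neighbour. Failing that, I would fall back to an exhaustive check over the finitely many connected graphs on at most $5$ vertices and their admissible zero patterns.
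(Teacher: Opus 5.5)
Your ingredients are the right ones, but as written the proposal does not close. The step you single out as the ``main obstacle'' is left open: that $|S_1|=2$ forces $|\phi_1(p)|=|\phi_1(q)|$. You also keep in reserve a row-sum argument, a separate treatment of the three-point support, and an exhaustive check over graphs.

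In fact your own Step~1 settles the obstacle. The neighbour condition there holds for \emph{every} vertex outside $S_1$, not only for those in $S_2$. You already observed that connectedness and $S_1\neq V$ give some $w\notin S_1$ adjacent to $S_1$. With $S_1=\{p,q\}$, this $w$ must be adjacent to both $p$ and $q$, and $(H_Q\phi_1)(w)=\lambda_1\phi_1(w)=0$ reads $\phi_1(p)+\phi_1(q)=0$. So $c=1$ always. Orthogonality then gives $\phi_1(p)\bigl(\phi_k(p)-\phi_k(q)\bigr)=0$, i.e.\ $\phi_k(p)=\phi_k(q)$ for every $k\neq 1$. Hence rows $p$ and $q$ of $M$ coincide, contradicting (I).

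You have already shown that one of the two supports has exactly two elements. So this single argument covers the $(2,2)$, $(2,3)$ and $(3,2)$ cases alike, and the detour through row sums and the three-point support is unnecessary. Your fallback ``use the eigen-equations at $p$ and $q$ to force it'' would in fact fail if it were ever needed. When $p\sim q$ and $(H_Q)_{pp}\neq(H_Q)_{qq}$, an eigenvector of the corresponding $2\times 2$ block satisfies both equations with $|\phi_1(p)|\neq|\phi_1(q)|$. Only the equation at the outside neighbour forces equal moduli.

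Once completed this way, your argument is a genuinely different and somewhat leaner route than the paper's. The paper first uses the outside-neighbour equation to get $\varphi(v)=-\varphi(u)$ and that $u,v$ have the same neighbours outside $\{u,v\}$. It then uses the eigen-equations at $u$ and $v$ to get equal diagonal entries, so the transposition of $u$ and $v$ commutes with $H_Q$. Finally, simplicity of the spectrum makes every eigenvector even or odd under that transposition. Your orthogonality step gets the equality of the two rows of $M$ directly from $\phi_1(q)=-\phi_1(p)$. It needs neither the equality of diagonal entries nor the symmetry. It uses simplicity of the spectrum only to ensure that the eigenvector with two-point support is, up to scaling, a member of the orthonormal basis defining $M$.
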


\begin{proof} There is nothing to do for $|V|=1$.
Assume, by contradiction, that Condition \((S)\) does not hold. Then
there exist two eigenvectors \(\varphi\) and \(\psi\) of \(H_Q\) whose
supports are disjoint:
\[
\operatorname{supp}\varphi\cap\operatorname{supp}\psi=\varnothing.
\]

We first observe that a nonzero eigenvector of \(H_Q\) cannot be
supported at a single vertex. Indeed, suppose that
\(\operatorname{supp}\varphi=\{u\}\). Since \(G\) is connected and has
at least two vertices, there exists a vertex \(w\sim u\). The
eigenvalue equation at \(w\) gives
\[
0=\lambda\varphi(w)=(H_Q\varphi)(w)=-\varphi(u),
\]
which is impossible.

Consequently,
\[
|\operatorname{supp}\varphi|\geq 2,
\qquad
|\operatorname{supp}\psi|\geq 2.
\]
When $|V|=2$, both eigenvectors have then full support, 
and when $|V|=3$, the two supports have at least one common element. In both cases
Condition  \textup{(S)} is then satisfied. It therefore remains to consider the cases   $|V|=4$ or $5$.
Since the two supports are disjoint and \(|V|\leq 5\), at least one of
them has cardinality exactly \(2\). 

Without loss of generality, write
\[
\operatorname{supp}\varphi=\{u,v\}.
\]
Set
$\varphi(u)=a$ and $\varphi(v)=b$, 
where \(a,b\neq 0\). 
For every \(w\notin\{u,v\}\), the eigenvalue equation at \(w\) yields
\[
0=(H_Q-\lambda I)\varphi(w)
=
-\mathbf 1_{\{w\sim u\}}a
-\mathbf 1_{\{w\sim v\}}b.
\]
Because \(G\) is connected, there exists at least one vertex outside
\(\{u,v\}\) adjacent to \(u\) or \(v\). The preceding identity then
implies
\[
b=-a.
\]
Moreover, for every \(w\notin\{u,v\}\), $w\sim u$ if and only if $w\sim v$.
Thus \(u\) and \(v\) have the same neighbors outside \(\{u,v\}\).

Let
\[
h_{xx}=\deg(x)+Q(x)
\]
denote the diagonal entries of \(H_Q\), and let
\[
\varepsilon=
\begin{cases}
1,& u\sim v,\\
0,& u\not\sim v.
\end{cases}
\]
If \(H_Q\varphi=\lambda\varphi\), then the eigenvalue equations at
\(u\) and \(v\), together with \(b=-a\), give
\[
(h_{uu}+\varepsilon)a=\lambda a
\quad\mbox{and}\quad
(h_{vv}+\varepsilon)a=\lambda a.
\]
Since \(a,b\neq0\), it follows that $h_{uu}=h_{vv}$.

Therefore \(H_Q\) is invariant under the transposition exchanging the
vertices \(u\) and \(v\). More precisely, if \(P\) denotes the
permutation matrix associated with this transposition, then
\[
PH_Q=H_QP.
\]

The spectrum of
\(H_Q\) is simple. Hence every eigenvector \(\phi_k\) of \(H_Q\) is
also an eigenvector of \(P\). Since \(P^2=I\), the corresponding
eigenvalue of \(P\) is either \(1\) or \(-1\). Thus
\[
\phi_k(u)=\phi_k(v)
\qquad\text{or}\qquad
\phi_k(u)=-\phi_k(v),
\]
and in either case $\phi_k(u)^2=\phi_k(v)^2$.

It follows that the \(u\)-th and \(v\)-th rows of the
squared eigenvector matrix
\[
M=\bigl(\phi_k(j)^2\bigr)_{j,k\in V}
\]
are equal. Hence \(M\) is singular, contradicting Condition \((I)\).

Therefore,  Condition \((S)\) must hold.
\end{proof}

\subsection{Eigenvectors with the same pointwise modulus}
\label{sec:obs3}

We record another elementary obstruction, namely the simplest failure of Condition $(I)$
when two of the  squared-eigenvectors $\bigl(\phi_p^2(m)\bigr)_{m=1}^n$
and $\bigl(\phi_q^2(m)\bigr)_{m=1}^n$ are already linearly dependent.
As the $\phi_j$'s have norm one, this means that $\phi_p^2(m)=\phi_q^2(m)$.
The next lemma shows
that these two modes may be interchanged without changing the intensities.

\begin{lemma}[Two-mode swapping]
\label{lem:swap-ambiguity}
Let \(H\) be a real symmetric matrix on \(\C^n\), with real orthonormal
eigenbasis \(\{\phi_j\}_{j=1}^n\). 
Assume that for some \(p\neq q\),
\[
        \phi_p(m)^2=\phi_q(m)^2,
        \qquad m=1,\ldots,n.
\]
For \(\alpha,\beta\in\R\), set
\[
        u_0=\alpha\phi_p+\beta\phi_q,
        \qquad
        v_0=\beta\phi_p+\alpha\phi_q .
\]
Then
\[
        |(e^{-itH}u_0)(m)|
        =
        |(e^{-itH}v_0)(m)|,
        \qquad t\in\R,\ m=1,\ldots,n.
\]
Moreover, if \(\alpha\beta\neq0\) and \(\alpha\neq\pm\beta\), then
\[
        v_0\not\trivial u_0 .
\]
\end{lemma}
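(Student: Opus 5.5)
The plan is to compute the phaseless intensity directly and observe that it is symmetric under exchanging $\alpha$ and $\beta$. Since $u_0$ and $v_0$ lie in the span of two eigenvectors, we have
\[
(e^{-itH}u_0)(m)=\alpha e^{-i\lambda_p t}\phi_p(m)+\beta e^{-i\lambda_q t}\phi_q(m),
\]
and similarly for $v_0$ with $\alpha,\beta$ exchanged. All of $\alpha,\beta,\phi_p(m),\phi_q(m)$ are real, so expanding the squared modulus gives
\[
|(e^{-itH}u_0)(m)|^2=\alpha^2\phi_p(m)^2+\beta^2\phi_q(m)^2+2\alpha\beta\,\phi_p(m)\phi_q(m)\cos\bigl((\lambda_p-\lambda_q)t\bigr).
\]
For $v_0$ the cross term is the same, because it is symmetric in $\alpha,\beta$. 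The diagonal part becomes $\beta^2\phi_p(m)^2+\alpha^2\phi_q(m)^2$. The hypothesis $\phi_p(m)^2=\phi_q(m)^2$ shows that both diagonal parts equal $(\alpha^2+\beta^2)\phi_p(m)^2$. So the intensities agree for all $t\in\R$ and all $m$. This argument does not need simplicity of the spectrum, and it also covers $\lambda_p=\lambda_q$.

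For the second claim, suppose $v_0=cu_0$ with $c\in\T$. Since $\phi_p,\phi_q$ are orthonormal and hence linearly independent, comparing coefficients gives $\beta=c\alpha$ and $\alpha=c\beta$. Because $\alpha\beta\neq0$, multiplying these gives $\alpha\beta=c^2\alpha\beta$, so $c^2=1$ and $c=\pm1$. Then $\beta=\pm\alpha$, which contradicts $\alpha\neq\pm\beta$. Therefore $v_0\not\trivial u_0$.

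Neither step presents a real obstacle. The only points that need care are that all quantities are real, so the cross terms coincide exactly rather than up to conjugation, and that the linear independence of $\phi_p,\phi_q$ justifies comparing coefficients.
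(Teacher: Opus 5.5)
Your proof is correct and follows essentially the same route as the paper. The paper first writes $\phi_q(m)=s_m\phi_p(m)$ with $s_m\in\{\pm1\}$ and then expands $|\alpha A+s_m\beta B|^2$, while you expand directly and apply $\phi_p(m)^2=\phi_q(m)^2$ to the diagonal terms; this is the same computation, and your nontriviality argument ($\beta=c\alpha$, $\alpha=c\beta$, hence $c^2=1$) matches the paper's.
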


\begin{proof}
For each \(m\), write
\[
        \phi_q(m)=s_m\phi_p(m),
        \qquad s_m\in\{\pm1\},
\]
with \(s_m\) arbitrary when both entries vanish. Put
\[
        A=e^{-i\lambda_p t},
        \qquad
        B=e^{-i\lambda_q t}.
\]
Then
\[
        (e^{-itH}u_0)(m)
        =
        \phi_p(m)(\alpha A+s_m\beta B),
\]
and
\[
        (e^{-itH}v_0)(m)
        =
        \phi_p(m)(\beta A+s_m\alpha B).
\]
Since \(\alpha,\beta\in\R\),
\[
        |\alpha A+s_m\beta B|^2
        =
        \alpha^2+\beta^2
        +
        2s_m\alpha\beta\,\Re(A\overline B),
\]
and the right-hand side is unchanged after interchanging \(\alpha\) and
\(\beta\). Hence
\[
        |(e^{-itH}u_0)(m)|
        =
        |(e^{-itH}v_0)(m)|.
\]

It remains to check nontriviality. If \(v_0=cu_0\) for some \(c\in\mathbb T\),
then, taking inner products with \(\phi_p\) and \(\phi_q\),
\[
        \beta=c\alpha,
        \qquad
        \alpha=c\beta .
\]
Since \(\alpha\beta\neq0\), this gives \(c^2=1\), hence \(c=\pm1\), and
therefore \(\beta=\pm\alpha\). This contradicts the assumption. Thus
\(v_0\not\trivial u_0\).
\end{proof}

\begin{example}    
We have already seen in Example \ref{ex:2vertex} that this may happen for the two-vertex graph
$P_2=(\{1,2\},\{(1,2)\})$ when $Q=0$ --- or more generally
when the potential is constant $Q=(q_1,q_1)$. In this case, the two eigenvectors are given by
\[
        \phi^+=\frac1{\sqrt2}(1,1),
        \qquad
        \phi^-=\frac1{\sqrt2}(1,-1).
\]

Now let $G=(V,E)$ be another finite graph, $Q\in\R^V$ and $H_Q=\Delta_G+Q$. Extend $Q$ to $\{1,2\}\times V$
by setting $R(j,j')=Q(j')$ and take $H_R=\Delta_{P_2\square G}+R$.
Then if $\phi$ is an eigenvector of $H_Q$, $\phi^\pm\otimes\phi$ is an eigenvector of $H_R$.
As
$$
\bigl(\phi^\pm\otimes\phi(j,k)\bigr)^2=\phi^\pm(j)^2\phi(k)^2=\dfrac{1}{2}\phi(k)^2
$$
so that Lemma \ref{lem:swap-ambiguity} applies and $\Delta_{P_2\square G}$ fails to do phase retrieval.
\end{example}

\section{Conclusion and future work}\label{sec:conclusion}

In this paper, we investigated the phase retrieval problem for Schr\"odinger evolution on finite connected simple graphs,
focusing on Schr\"odinger operators that do phase retrieval.
We have highlighted the role of the potential for this to be the case.
In particular, we have constructed explicit potentials $Q$ such that $H_Q=\Delta_G+Q$ does Schr\"odinger phase
retrieval. We have also shown that for a generic potential, the same property holds.

The free case $Q=0$ is more rigid. For instance, we have shown that Condition \textup{(I)} is never
satisfied in this case. Further, it is easy to see that Condition $B_2$ does not hold for
any connected graph on four vertices. This alone does not mean that $\Delta_G$
fails to do phase retrieval but makes the task more difficult, especially when,
on top of $(I)$, one of the two other conditions
$B_2$ or $(S)$ is missing. It would therefore be interesting
to characterize graphs for which Conditions $B_2$ and \textup{(S)} hold.

It is known, for instance, \cite[Theorem 2.7]{ChristoffersenLuhNguyenWang},  that with high probability,
Erd\"os-Renyi graphs satisfy Condition $(S)$
(\cite{ChristoffersenLuhNguyenWang} shows that 
with high probability every Laplacian eigenvector has at most one coordinate smaller than $n^{-B}$) and that the eigenvalues are simple
but the stronger Condition $B_2$ is unknown to our knowledge.
Another question is to find
conditions on $G$ under which the solution of the Schr\"odinger phase retrieval
problem for $\Delta_G$ is unique up to a global phase for almost every initial condition $u_0$. We have seen in Theorem \ref{thm:almostall} that this is the case under Conditions $B_2$ and $(S)$.
We leave a further investigation to future work. 

Finally, another important direction that remains open is the development of reconstruction algorithms of the problem.
There are several families of algorithms that are adapted for finite-dimensional phase retrieval problems
from frame coefficients, such as Gerchberg--Saxton, PhaseLift, PhaseCut; {\it see,  e.g.},  \cite{BauschkeCombettesLuke2002,CandesEldarStrohmerVoroninski2013,CandesStrohmerVoroninski2013,WaldspurgerDAspremontMallat2015}. An interesting direction for future research would be to evaluate these algorithms on the Schrödinger phase retrieval problem.




%


\begin{thebibliography}{10}
\bibitem{AKT}
{\sc A. Aldroubi, I. Krishtal, and S. Tang},
{\em Phaseless reconstruction from space–time samples}, Appl.
Comput. Harmon. Anal., {\bf 48} (2020), 395--414.

\bibitem{AllainAslanCoeneEtAl2025}
{\sc M.~Allain, S.~Aslan, W.~Coene, S.~Dirksen, J.~Dong, J.~Flamant, M.~Iwen,
  F.~Krahmer, T.~van Leeuwen, O.~Melnyk, A.~Menzel, A.~P. Mosk, V.~Nikitin,
  P.~Salanevich, G.~Plonka, and M.~Wellershoff}, {\em Phasebook: a survey of
  selected open problems in phase retrieval}, Sampl. Theory Signal Process.
  Data Anal., {\bf 23} (2025) Article 23.

\bibitem{BalanBodmannCasazzaEdidin2009}
{\sc R.~Balan, B.~G. Bodmann, P.~G. Casazza, and D.~Edidin}, {\em Painless
  reconstruction from magnitudes of frame coefficients}, J. Fourier Anal.
  Appl., {\bf 15} (2009), 488--501.

\bibitem{BCE}
{\sc R.~Balan, P.~Casazza, and D.~Edidin}, {\em On signal reconstruction
  without phase}, Appl. Comput. Harmon. Anal., {\bf 20} (2006), 345--356.

\bibitem{BH}
{\sc R. Beinert and M. Hasannasab}, 
{\em Phase retrieval and system identification in dynamical sampling via Prony’s method},
Adv. Comput. Math. {\bf 49} (2023), Article 56.

\bibitem{BauschkeCombettesLuke2002}
{\sc H.~H. Bauschke, P.~L. Combettes, and D.~R. Luke}, {\em Phase retrieval,
  error reduction algorithm, and {Fienup} variants: a view from convex
  optimization}, J. Opt. Soc. Am. A, {\bf 19} (2002),
  1334--1345.

\bibitem{BelousovIsmagilov2008Pauli}
{\sc P.~A. Belousov and R.~S. Ismagilov}, {\em {Pauli} problem and related
  mathematical problems}, Theoret. Math. Phys., {\bf 157} (2008), 1365--1369.

\bibitem{BF}
{\sc I. Bojarovska and A. Flinth}, 
{\em Phase Retrieval from Gabor Measurements,} J. Fourier Anal. Appl. {\bf 22} (2016), 542-567. 

\bibitem{brouwer2011spectra}
{\sc A.~Brouwer and W.~Haemers}, {\em Spectra of Graphs}, Springer Science \&
  Business Media, 2011.

\bibitem{CandesEldarStrohmerVoroninski2013}
{\sc E.~J. Cand{\`e}s, Y.~C. Eldar, T.~Strohmer, and V.~Voroninski}, {\em Phase
  retrieval via matrix completion}, SIAM J. Imaging Sci., {\bf 6} (2013),
199--225.

\bibitem{CandesStrohmerVoroninski2013}
{\sc E.~J. Cand{\`e}s, T.~Strohmer, and V.~Voroninski}, {\em {PhaseLift}: exact
  and stable signal recovery from magnitude measurements via convex
  programming}, Comm. Pure Appl. Math., {\bf 66} (2013), 1241--1274.

\bibitem{ChristoffersenLuhNguyenWang}
{\sc N.~Christoffersen, K.~Luh, H.~H.~Nguyen and J.~Wang},
{\em Eigenvalue gaps of the Laplacian of random graphs},
arXiv:2501.00234, 2025.

\bibitem{Co}
{\sc J.~V. Corbett}, {\em The {Pauli} problem, state reconstruction and
  quantum-real numbers}, Rep. Math. Phys., {\bf 57} (2006), 53--68.

\bibitem{CH}
{\sc J.~V. Corbett and C.~A. Hurst}, {\em Are wave functions uniquely
  determined by their position and momentum distributions?}, J.
  Austr. Math. Soc. B, {\bf 20} (1978), 182--201.

\bibitem{Fienup1982}
{\sc J.~R. Fienup}, {\em Phase retrieval algorithms: a comparison}, Appl. Opt.,
  {\bf 21} (1982), 2758--2769.

\bibitem{Godsil2012ControllableSubsets}
{\sc C.~Godsil}, {\em Controllable subsets in graphs}, Ann. Comb.,
  {\bf 16} (2012), 733--744.

\bibitem{Go}
{\sc D.~Goyeneche, G.~Ca{\~n}as, S.~Etcheverry, E.~S. G{\'o}mez, G.~B. Xavier,
  G.~Lima, and A.~Delgado}, {\em Five measurement bases determine pure quantum
  states on any dimension}, Phys. Rev. Lett., {\bf 115} (2015), ~090401.

\bibitem{GKK}
{\sc D. Gross, F. Krahmer, and R. Kueng},
{\em Improved recovery guarantees for phase retrieval from coded diffraction patterns},
Appl. Comput. Harmon. Anal. {\bf 42} (2017) 37–-64.

\bibitem{GrohsKoppensteinerRathmair2020}
{\sc P.~Grohs, S.~Koppensteiner, and M.~Rathmair}, {\em Phase retrieval:
  uniqueness and stability}, SIAM Rev., {\bf 62} (2020), 301--350.

\bibitem{MonfaredShader2016NowhereZero}
{\sc K.~Hassani~Monfared and B.~L. Shader}, {\em The nowhere-zero eigenbasis
  problem for a graph}, Linear Algebra Appl., {\bf 505} (2016), 296--312.

\bibitem{HMW}
{\sc T.~Heinosaari, L.~Mazzarella, and M.~M. Wolf}, {\em Quantum tomography
  under prior information}, Comm. Math. Phys., {\bf 318} (2013), 355--374.

\bibitem{HornJohnson}
{\sc R.~A. Horn and C.~R. Johnson},
\emph{Matrix Analysis}, 2nd ed., Cambridge University Press, Cambridge, 2013.

\bibitem{Is}
{\sc R.~S. Ismagilov}, {\em On the {Pauli} problem}, Funktsional. Anal. i
 Prilozhen., {\bf 30} (1996), 82--84.

\bibitem{Jaming1999Radar}
{\sc P.~Jaming}, {\em Phase retrieval techniques for radar ambiguity problems},
  J. Fourier Anal. Appl., {\bf 5} (1999), 309--329.

\bibitem{JamingAcha}
{\sc P.~Jaming}, {\em Uniqueness results in an extension of {Pauli}'s phase
  retrieval}, Appl. Comput. Harmon. Anal., {\bf 37} (2014), 413--441.

\bibitem{Janssen1992Zak}
{\sc A.~J. E.~M. Janssen}, {\em The {Zak} transform and some counterexamples in
  time-frequency analysis}, IEEE Trans. Inform. Theory, {\bf 38} (1992),
  168--171.

\bibitem{KlibanovSacksTikhonravov1995}
{\sc M.~V. Klibanov, P.~E. Sacks, and A.~V. Tikhonravov}, {\em The phase
  retrieval problem}, Inverse Prob., {\bf 11} (1995), 1--28.

\bibitem{LeveneOblakSmigoc2024GenericEigenvectors}
{\sc R.~H. Levene, P.~Oblak, and H.~{\v{S}}migoc}, {\em Distinct eigenvalues
  are realizable with generic eigenvectors}, Linear and Multilinear Algebra, {\bf 72}
  (2024), 2054--2068.

\bibitem{Mixon2015PhaseTransitions}
{\sc D.~G. Mixon}, {\em Phase transitions in phase retrieval}, in Excursions in
  Harmonic Analysis, Volume 4, R.~Balan, M.~Begu{\'e}, J.~J. Benedetto,
  W.~Czaja, and K.~A. Okoudjou, eds., Applied and Numerical Harmonic Analysis,
  Birkh{\"a}user, 2015, 123--147.

\bibitem{MV}
{\sc D.~Mondragon and V.~Voroninski}, {\em Determination of all pure quantum
  states from a minimal number of observables}. {\tt arXiv:1306.1214}

\bibitem{MP}
{\sc B.~Z. Moroz and A.~M. Perelomov}, {\em On a problem posed by {Pauli}},
  Theoret. Math. Phys., {\bf 101} (1994), 1200--1204.

\bibitem{Pa}
{\sc W.~Pauli}, {\em Die allgemeinen prinzipien der wellenmechanik}, in
  Handbuch Der Physik, H.~Geiger and K.~Scheel, eds., vol.~24, Springer-Verlag,
  Berlin, 1933, 83--272.
\newblock English translation: \emph{General Principles of Quantum Mechanics},
  Springer-Verlag, Berlin, 1980.

\bibitem{PoignardPereiraPade2018LaplacianSpectra}
{\sc C.~Poignard, T.~Pereira, and J.~P. Pade}, {\em Spectra of {Laplacian}
  matrices of weighted graphs: structural genericity properties}, SIAM J. Appl.
  Math., {\bf 78} (2018), 372--394.

\bibitem{Re}
{\sc H.~Reichenbach}, {\em Philosophic Foundations of Quantum Mechanics},
  University of California Press, Berkeley, 1944.

\bibitem{ShechtmanEldarCohenEtAl2015}
{\sc Y.~Shechtman, Y.~C. Eldar, O.~Cohen, H.~N. Chapman, J.~Miao, and
  M.~Segev}, {\em Phase retrieval with application to optical imaging: a
  contemporary overview}, IEEE Signal Process. Mag., {\bf 32} (2015), 87--109.

\bibitem{WaldspurgerDAspremontMallat2015}
{\sc I.~Waldspurger, A.~d'Aspremont, and S.~Mallat}, {\em Phase recovery,
  {MaxCut} and complex semidefinite programming}, Math. Program., {\bf 149} (2015),
  47--81.
\end{thebibliography}
\end{document}